\newtheorem{theorem}{Theorem}
\newtheorem{lemma}[theorem]{Lemma}
\newtheorem{claim}{Claim}
\newtheorem{corollary}[theorem]{Corollary}
\newtheorem{observation}[theorem]{Observation}
\newtheorem{conjecture}{Conjecture}
\theoremstyle{definition}
\newtheorem*{remark*}{Remark}
\def\AA{{\mathcal A}}
\def\EE{{\mathbb E}}
\def\ZZ{{\mathbb Z}}
\renewcommand{\ge}{\geqslant}
\renewcommand{\geq}{\geqslant}
\renewcommand{\le}{\leqslant}
\renewcommand{\leq}{\leqslant}
\renewcommand{\Pr}{{\mathbb P}}
\newcommand\cC{\mathcal{C}}
\newcommand\cB{\mathcal{B}}
\newcommand\cE{\mathcal{E}}
\newcommand\cF{\mathcal{F}}
\newcommand\cG{\mathcal{G}}
\newcommand\sF{\mathfrak{F}}
\newcommand\cc{\mathrm{c}}
\newcommand\ceil[1]{\lceil #1\rceil}
\newcommand\eps{\varepsilon}
\def\diam{\operatorname{diam}}
\begin{document}

\title{On the maximum running time in graph bootstrap percolation}
\author{B\'ela Bollob\'as%
\thanks{Department of Pure Mathematics and Mathematical Statistics,
Wilberforce Road, Cambridge CB3 0WB, UK and
Department of Mathematical Sciences, University of Memphis, Memphis TN 38152, USA.
E-mail: {\tt b.bollobas@dpmms.cam.ac.uk}.}
\thanks{Research supported in part by NSF grant DMS-1301614 and
EU MULTIPLEX grant 317532.}
,
Micha{\l} Przykucki%
\thanks{Mathematical Institute, University of Oxford, Radcliffe Observatory Quarter, Woodstock Road, Oxford OX2 6GG, UK.
E-mail: {\tt michal.przykucki@st-annes.ox.ac.uk}.}
,
Oliver Riordan%
\thanks{Mathematical Institute, University of Oxford, Radcliffe Observatory Quarter, Woodstock Road, Oxford OX2 6GG, UK.
E-mail: {\tt riordan@maths.ox.ac.uk}.}
\ and Julian Sahasrabudhe%
\thanks{Department of Mathematical Sciences, University of Memphis, Memphis TN 38152, USA.
E-mail: {\tt julian.sahasra@gmail.com}.}
} \maketitle

\begin{abstract}
 Graph bootstrap percolation is a simple cellular automaton introduced by Bollob\'as in
 1968. Given a graph $H$ and a set $G \subseteq E(K_n)$ we initially ``infect'' all edges
 in $G$ and then, in consecutive steps, we infect every $e \in K_n$ that completes a new
 infected copy of $H$ in $K_n$. We say that $G$ \emph{percolates} if eventually every edge in
 $K_n$ is infected. The extremal question about the size of the smallest percolating sets
 when $H = K_r$ was answered independently by Alon, Kalai and Frankl.
Here we consider a different question raised more recently by Bollob\'as: what is
 the maximum time the process can run before it stabilizes?
It is an easy observation that
 for $r=3$ this maximum is $\lceil \log_2 (n-1) \rceil $. However, a new phenomenon occurs
 for $r=4$ when, as we show, the maximum time of the process is $n-3$. For $r \geq 5$ the
 behaviour of the dynamics is even more complex, which we demonstrate by showing that the
 $K_r$-bootstrap process can run for at least $n^{2-\varepsilon_r}$ time steps for some
 $\varepsilon_r$ that tends to $0$ as $r \to \infty$.
\end{abstract}

\section{Introduction}

Graph bootstrap percolation was introduced by Bollob\'as in 1968~\cite{weakSaturation} under the name weak saturation. For a fixed graph $H$, a graph $G$ on $n$ vertices is called \emph{weakly $H$-saturated} if there is no copy of $H$ in $G$, but there is an ordering of the missing edges of $G$ so that if they are added one-at-a-time, every added edge creates a new copy of $H$. Bollob\'as conjectured that the minimum size of a weakly $K_r$-saturated graph is $\binom{n}{2}-\binom{n-r+2}{2}$. This conjecture was confirmed independently by Alon~\cite{alonSaturation}, Frankl~\cite{franklSaturation} and Kalai~\cite{kalaiSaturation}.

Recently, Balogh, Bollob{\'a}s and Morris~\cite{graphBootstrap} observed that weak
saturation is strongly related to bootstrap percolation, a dynamical process introduced by
Chalupa, Leath and Reich~\cite{bootstrapbethe} in 1979 to model the behaviour of
ferromagnets. In bootstrap percolation on a graph $G=(V,E)$ with infection threshold $r
\geq 2$ we choose a set $A \subseteq V$
of initially ``infected'' vertices and we declare
the remaining vertices ``healthy''. Then, in consecutive rounds, we infect all healthy
vertices with at least $r$ infected neighbours. We say that $A$ \emph{percolates} if, starting
from $A$ as the set of the initially infected vertices, we eventually infect every vertex
in $V$. More precisely, we set $A_0 = A$ and for $t=1,2,3, \ldots$ we define
\begin{equation}
\label{eq:bootstrapPercolation}
 A_{t} = A_{t-1} \cup \{v \in V: |N_G(v) \cap A_{t-1}| \geq r \},
\end{equation}
where $N_G(v)$ is the neighbourhood of the vertex $v$ in $G$. Hence $A$ percolates if we have $\bigcup_{t=0}^{\infty} A_t = V(G)$.

Let us then redefine weak saturation in the language of bootstrap percolation. We fix a graph $H$ and we choose a set $G \subseteq E(K_n)$ of edges that we initially infect.
(We shall switch back and forth between thinking of $G$ as a graph and as a set of edges.)
Then, in consecutive steps, we infect every $e \in K_n$ that completes a new infected copy of $H$ in $K_n$. Formally, we take $G_0 = G$ and for $t = 1,2,3, \ldots$ let
\begin{equation}
\label{eq:graphBootstrap}
 G_{t} = G_{t-1} \cup \{e \in E(K_n): G_{t-1}+e \mbox{ contains more copies of } H \mbox{ than } G_{t-1} \}.
\end{equation}
We call this process \emph{$H$-bootstrap percolation} and we say that $G$
\emph{percolates} (or \emph{$H$-percolates}) if eventually every edge in $K_n$ is
infected.
Hence, $G$ is weakly $H$-saturated if and only if it percolates in
$H$-bootstrap percolation and contains no copy of $H$.

In bootstrap percolation we are usually interested in the following setup: for $0 < p < 1$, we initially infect every vertex in $V$ independently at random with probability $p$. We then look for the critical probability $p = p_{\mathrm{c}}(G,r)$ for which percolation is more likely to occur than not. A large number of results, often very sharp, have been obtained in the search for critical probabilities for various graphs $G$ and values of $r$. To name a few, the values of the critical probabilities $p_{\mathrm{c}}(\ZZ^d,r)$ for infinite grids were found by van Enter~\cite{strayleysargument} ($r=d=2$) and Schonmann~\cite{cellularbehaviour} (for general $r$ and $d$). Holroyd~\cite{sharpmetastability} and Balogh, Bollob{\'a}s, Duminil-Copin and Morris~\cite{sharpbootstrapall} obtained sharp bounds on the critical probabilities for finite grids. Various families of random graphs were studied in this context by Balogh and Pittel~\cite{randomregular}, Janson, {\L}uczak, Turova and Vallier~\cite{bootstrapgnp} and Bollob{\'a}s, Gunderson, Holmgren, Janson and Przykucki~\cite{galtonWatson}.

An analogous question in graph bootstrap percolation was considered in~\cite{graphBootstrap}. There, the authors look at the random graph $G(n,p)$ obtained by choosing every edge of $K_n$ independently at random with probability $p = p(n)$. For fixed $r$,
they obtain bounds on the values of $p$ for which $G(n,p)$ is likely to percolate in the $K_r$-bootstrap process.

Another family of problems considered in bootstrap percolation concerns the extremal properties of the process. For example, the size of the smallest percolating sets in various graphs was studied by Pete~\cite{randomdisease}, Balogh, Bollob{\'a}s, Morris and Riordan~\cite{linearAlgebra} and by Morrison and Noel~\cite{smallestHypercube}. Morris~\cite{largestgridbootstrap} and Riedl~\cite{largesthypercubebootstrap} analysed the size of the largest minimal percolating sets for the $n \times n$ square grid and the hypercube respectively, both for $r = 2$. Przykucki~\cite{przykucki01} and Benevides and Przykucki~\cite{benevidesprzykucki02,benevidesprzykucki01} studied the maximum time the infection process on a square grid or a hypercube can take before it stabilizes for $r=2$.

In this paper we study a question due to Bollob\'{a}s~\cite{BBProblemPaper}, who initiated the study of the maximum running time of the graph bootstrap process. Let us define the \emph{maximum running time} of the $K_r$-bootstrap process to be
\begin{equation}
\label{eq:maxRunningTime}
 M_r(n) = \max \{t: \mbox{there exists } G \subseteq E(K_n) \mbox{ s.t. } G_t \neq G_{t-1} \mbox{ in the } K_r \mbox{-bootstrap process} \},
\end{equation}
where $G_t$ is defined as in \eqref{eq:graphBootstrap}. We say that the process \emph{stabilizes at time $t$} if $t$ is the smallest integer such that  $G_{t} = G_{t+1}$. Hence $M_r(n)$ is the maximum time the $K_r$-bootstrap process takes before it stabilizes starting from any initially infected graph with $n$ vertices.

It is an easy observation that $M_3(n) = \lceil \log_2 (n-1) \rceil$.
Indeed, at every step of the $K_3$-bootstrap process, any two vertices at distance two are joined by an edge. Hence, if $G_t$ is connected then we have $\diam(G_{t+1}) = \lceil \diam(G_t) / 2 \rceil$. If $G_t$ is not connected then the components of $G_t$ remain disjoint in $G_{t+1}$. Therefore to maximize the time until the process stabilizes we should take $G_0$ to be a connected graph with as large diameter as possible, i.e., a path on $n$ vertices. It then takes $\lceil \log_2 (n-1) \rceil$ steps until we obtain a clique.

We shall prove two main results.

\begin{theorem}
\label{thm:maxTime4}
We have $M_4(n) = n-3$ for all $n \geq 3$.
\end{theorem}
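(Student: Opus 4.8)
The statement is an equality, so it has two halves: the lower bound $M_4(n)\ge n-3$, for which it suffices to exhibit one slowly percolating graph on $n$ vertices, and the upper bound $M_4(n)\le n-3$, which must hold for every $G\subseteq E(K_n)$. For the lower bound the plan is to use the following explicit $G=G_0$ on vertices $v_1,\dots,v_n$: take a triangle on $\{v_1,v_2,v_3\}$ and, for each $i=1,\dots,n-3$, add the two ``pendant'' edges $v_{i+3}v_{i+2}$ and $v_{i+3}v_{c_i}$, where $c_i=1$ when $i$ is odd and $c_i=2$ when $i$ is even. I would then prove, by induction on $t$, the precise description that $G_t$ consists of the clique on $\{v_1,\dots,v_{t+3}\}$ together with the original two pendant edges at each of $v_{t+4},\dots,v_n$. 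Two facts drive the induction. First, a vertex $v_{i+3}$ with $i>t$ gains no edge up to time $t$: its only neighbours are $v_{c_i}$ and $v_{i+2}$, and the edge $v_{c_i}v_{i+2}$ is not present in $G_t$, since it is not born until step $i-1$ (the step at which $v_{i+2}$ is absorbed into the clique and joined to every earlier clique-vertex other than its own two pendant neighbours $v_{i+1}$ and $v_{c_{i-1}}$; here one uses $c_i\notin\{i+1,c_{i-1}\}$). Second, at step $t+1$ the edge $v_{c_{t+1}}v_{t+3}$ has just been born, so $v_{t+4}$ now has two adjacent neighbours inside the clique on $\{v_1,\dots,v_{t+3}\}$, and is therefore joined to all of it in the single step $t+1$, while no other edge appears. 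Consequently $G_{n-3}=K_n\ne G_{n-4}$, and this process runs for exactly $n-3$ steps.

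For the upper bound I would first reduce to the case that $G$ is connected: distinct components never interact in $K_4$-bootstrap percolation, and a component on $m$ vertices will be shown to stabilize within $\max(0,m-3)\le n-3$ steps (in particular a triangle-free component, or one with at most two vertices, is already stable, since an infected edge always sits in a triangle). So assume $G$ is connected with stabilization time $T$, and let $\omega(\cdot)$ denote clique number. The heart of the argument is the claim that if step $s$ is active, that is $1\le s\le T$, then $\omega(G_s)\ge s+3$; this gives $T+3\le\omega(G_T)\le n$, as required. The base case $s=1$ is clear, as the first edge ever born completes a copy of $K_4$. For the inductive step, $G_{s-1}\ne G_{s-2}$ forces (by induction) a clique of size $s+2$ in $G_{s-1}$, and one must show that the activity at step $s$ enlarges some such clique. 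The mechanism to exploit is that as soon as an edge $pq$ is born at step $s-1$ via a $K_4$ on $\{p,q,\alpha,\beta\}$, every vertex adjacent (in $G_{s-1}$) to both $p$ and $q$ becomes joined at step $s$ to all of $\{p,q,\alpha,\beta\}$; chaining this along the sequence of edge-births that force step $s$ to occur should produce the desired $(s+3)$-clique. To run this cleanly the inductive hypothesis has to be strengthened to record where the clique sits relative to the most recently born edges — essentially, that an $(s+2)$-clique in $G_{s-1}$ can be chosen to contain an endpoint of the edge born at step $s-1$, together with enough control on its remaining vertices to drive the next absorption.

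The main obstacle is exactly this upper-bound induction. In a connected graph several regions can be active in the same step; intuitively that only accelerates the process, but one needs an invariant describing the growing clique and the ``pending'' vertices feeding into it that is robust to all of the simultaneous events, and one must check that the case analysis over which edge of a certifying $K_4$ was born at the previous step always continues the induction. Everything else — the reduction to connected graphs, the base cases, and the verification of the explicit lower-bound construction — is routine by comparison.
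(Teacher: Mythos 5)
Your lower bound construction is different from the paper's but appears correct. The paper's lower bound is an induction on $n$: starting from $K_4^-$, at each stage a new vertex $z$ is joined to the two endpoints of an edge born at the last step of the previous process, and one argues that the new process is a one-step-delayed copy of the old. Your explicit triangle-plus-pendant graph achieves the same, and your tracking of which pendant edge ``wakes up'' at each step (via the parity of $c_i$) is sound; it just needs the routine verification that no stray $K_4^-$ arises among the still-pendant vertices, which works because each such vertex has degree $2$ with a missing edge between its two neighbours.

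The upper bound sketch, however, has a real gap, and it is exactly where you say the difficulty lies. Your target invariant is $\omega(G_s)\ge s+3$ for every active step $s$, strengthened to say that some $(s+2)$-clique in $G_{s-1}$ contains an endpoint of ``the edge born at step $s-1$''. This is not enough to run the induction, for two reasons. First, many edges may be born at step $s-1$, and the edge $f$ of the certifying $K_4^-$ for a step-$s$ edge $e$ need not be the one your clique was anchored to; the invariant must therefore be stated for \emph{every} edge born at step $s-1$, not ``the'' edge. Second, and more importantly, even if you demand that every newly born edge (not merely one of its endpoints) lies in a clique of the right size, this cannot always be achieved at the same time step: when the certifying $K_4$ meets the inherited clique $C$ in exactly two vertices, the clique $C\cup K$ only becomes complete one step later. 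The paper handles this by proving a delayed version (their Lemma~\ref{lem:K_4EdgesCompleteCliques}): every edge $e$ added at time $t$ lies in a clique of size $t+3+d$ in $G_{t+d}$ for some $d=d(e)\in\{0,1\}$. The flexibility of $d$ is what makes the case $|K\cap C|=2$ close (via the clique-merging observation, which is a cleaner form of your ``every common neighbour of $p,q$ gets joined to $\{p,q,\alpha,\beta\}$'' mechanism), and the final bound $T\le n-3$ then follows by applying the lemma to the last active step, where $G_{T}=G_{T+1}$ kills off the offset. Without introducing the offset $d$ and stating the invariant edge-by-edge, the ``chaining'' you describe does not go through; your own phrase ``should produce the desired clique'' is where the proof currently stops.
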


\begin{theorem}
\label{thm:maxTime5}
For each fixed $r\ge 5$, we have $M_r(n) \geq n^{2-\alpha_r-o(1)}$ as $n\to\infty$,
where $\alpha_r = \frac{r-2}{\binom{r}{2}-2}$.
\end{theorem}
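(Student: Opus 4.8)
The plan is to prove the lower bound by an explicit construction: for each fixed $r\ge 5$ and each large $n$ we exhibit an initially infected graph $G=G(n,r)$ on $n$ vertices whose $K_r$-bootstrap process does not stabilize before time $n^{2-\alpha_r-o(1)}$. The guiding principle is the following reformulation of the dynamics: an edge $\{u,v\}$ is added at step $t$ precisely when, at time $t-1$, the common infected neighbourhood of $u$ and $v$ contains a copy of $K_{r-2}$ --- these $r-2$ ``helper'' vertices, together with $u$ and $v$, spanning the new $K_r$. Hence a slow process is one in which, at almost every step, only a handful of pairs $\{u,v\}$ enjoy this property. We engineer this by arranging that essentially a single copy of $K_{r-2}$ plays the role of the helper set at any given time, that it ``migrates'' through the vertex set by local changes (swapping out one vertex at a time, with the new incidences themselves becoming infected only gradually), and that its successive positions are exactly what is needed to fill up $E(K_n)$, so that completing $K_n$ forces the migration --- and hence the process --- to be long. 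The construction must be rigid enough to forbid self-acceleration: for instance, if the infected graph is ever a ``band'' $\{\,ij:|i-j|\le k\,\}$ on path-ordered vertices, then $k$ roughly doubles at each step and the process ends in $O(\log n)$ steps, so a construction of this flavour would fail.

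Concretely, we assemble $G$ from a \emph{delay gadget}: a graph on $\Theta(s)$ vertices, depending on a parameter $s=s(n)\to\infty$, with a designated ``input'' edge set and ``output'' edge, such that (i) once the input is infected the output is eventually infected, but (ii) not before $\Omega(s)$ further steps, during which the internal dynamics is forced. Chaining and nesting copies of the gadget --- the output of one serving as part of the input of the next --- accumulates these delays, and the $\approx\binom n2$ edges of $K_n$ are among the last to be infected. The number of gadgets, their internal size $s$, and the budget of $n$ vertices are then balanced so that the total running time is $n^{2-\alpha_r-o(1)}$, with the $o(1)$ absorbing the slack between $s$ and $n$. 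The exponent $2-\alpha_r$ itself comes out of a cost accounting: completing one ``fresh'' copy of $K_r$ commits $r-2$ vertices as helpers while releasing only on the order of $\binom r2-2$ new infected edges, so an optimally slow process gains roughly $n^{\alpha_r}$ edges per step and must therefore run for about $\binom n2 / n^{\alpha_r}=n^{2-\alpha_r}$ steps.

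The argument then splits into two halves. The easy half is to check that $G$ percolates and that the intended infections occur in the intended order, so that the process genuinely lasts as long as claimed; this is a direct, if fiddly, verification inside each gadget, plus a check that gadgets glued along a common boundary do not short-circuit one another. The hard half --- and the main obstacle --- is the matching \emph{lower} bound on the number of steps: we must show the process cannot cheat, i.e., that the only copies of $K_r$ that ever close are the intended ones and that they close in the intended order (in particular, that two helper cliques are never simultaneously active, the phenomenon behind the band example). The plan is to do this via a monovariant: an integer-valued function $\Phi$ of the current infected graph that is small initially, is at most $\binom n2$ throughout, and provably increases by at most a bounded amount at each step --- equivalently, an explicit invariant describing the ``shape'' of the infected graph at every time, preserved by the dynamics and tight enough to leave no room for an unintended $K_r$ to form. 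Establishing this invariant, by a case analysis over the ways a new $K_r$ could close against the current shape, is where essentially all the difficulty lies; once it is in hand, the lower bound on the running time --- and hence the theorem --- follows by tracking $\Phi$ from the initial graph to the complete graph.
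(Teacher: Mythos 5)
Your proposal is a sketch of an approach, not a proof: the critical objects it relies on — the explicit ``delay gadget'' with its $\Omega(s)$ delay guarantee, the way nesting and chaining accumulates the delays, and above all the monovariant $\Phi$ together with its invariant describing the shape of the infected graph — are never exhibited. You acknowledge this yourself (``establishing this invariant \ldots is where essentially all the difficulty lies''), but that is precisely where a proof must do its work; as written nothing nontrivial has been verified. The cost-accounting heuristic for the exponent is also loose: in the actual construction the process gains $O(1)$ edges per step, not $n^{\alpha_r}$, and $\alpha_r$ arises from a more delicate balance (it is determined by the requirement that pairs of vertices do not accidentally accumulate $\binom{r}{2}-2$ infected edges into a common $r$-set).

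There is also a real strategic mismatch with what is likely to work. The paper builds a random \emph{$K_r$-chain}: a sequence $H_1,\ldots,H_T$ of $r$-cliques, each sharing one edge with the next and otherwise edge-disjoint, with the crucial extra property (``good'' chain) that no copy of $K_r^-$ lies across several $H_i$'s. For such a chain, deleting the shared edges except the first gives a graph where each step is \emph{forced} to add exactly one edge, so the running-time lower bound is immediate — no potential-function argument is needed. All the difficulty is pushed into the existence of a long good chain inside $n$ vertices, which the paper proves by a multi-try probabilistic construction and a union bound over ``minimal chain covers.'' Your plan reverses this: you posit a deterministic construction and defer all the difficulty to a lower-bound argument via a monovariant, with no indication of why the process can't shortcut. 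This is significant because the paper's own deterministic construction (Section~4) — which does have a clean forcing argument via ``no external triangles'' — only reaches $\Theta(n^{3/2})$, strictly below $n^{2-\alpha_r}$ for every $r\ge 5$; nothing in your proposal suggests how a gadget-based deterministic construction would beat that barrier. As it stands, the proposal does not establish the theorem and would need substantial new content — most importantly, an actual gadget and an actual proof of its delay — to become a proof.
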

 
The rest of this paper is organised as follows. In Sections~\ref{sec:r=4} and~\ref{sec:r>=5} we prove Theorems~\ref{thm:maxTime4} and~\ref{thm:maxTime5} respectively. We prove Theorem~\ref{thm:maxTime5} by giving a probabilistic argument showing the existence of initially infected sets of edges satisfying our bound. In Section~\ref{sec:deterministic} we give a deterministic construction of an initially infected set that in the $K_5$-bootstrap process runs for at least $\frac{1}{200}n^{3/2}$ time steps before it stabilizes. The construction can be easily adapted for all $r \geq 5$, always guaranteeing the running time to be at least $c_r n^{3/2}$ (this is weaker than Theorem~\ref{thm:maxTime5}, but the proof is much simpler). Finally, in Section~\ref{sec:problems} we discuss some open problems.

\section{Maximum running time in the $K_4$-bootstrap process}
\label{sec:r=4}

In this section we prove Theorem~\ref{thm:maxTime4}, that is, we show that $M_4(n) = n-3$ for all $n \geq 3$. We prove this in two steps: we first give a simple construction to show $M_4(n) \geq n-3$; then we prove the corresponding upper bound. Given a graph $G$, in this section we define $G_t$ as in \eqref{eq:graphBootstrap} with $H = K_4$. Thus $G_t$ is the graph obtained after running the $K_4$-process for $t$ steps, starting with the graph $G$.

\begin{lemma}
For all integers $n\geq 3$, we have $M_4(n) \geq n-3$.
\end{lemma}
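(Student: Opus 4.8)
The plan is to build, recursively in $n$, an explicit graph $G^{(n)}$ on vertex set $[n]$ whose $K_4$-process runs for $n-3$ steps; throughout I write $G^{(n)}_t$ for the graph after $t$ steps of the process started from $G^{(n)}$. (The case $n=3$ is vacuous, so assume $n\ge 4$.) Take $G^{(4)}$ to be $K_4$ minus an edge, that is, two triangles sharing an edge; then $G^{(4)}_1=K_4$, so this process runs for exactly one step, ends at $K_4$, and in particular infects at least one edge at its last step. Given $G^{(n-1)}$, suppose inductively that its process runs for $L_{n-1}\ge n-4$ steps, ends with $G^{(n-1)}_{L_{n-1}}=K_{n-1}$, and hence infects at least one edge at step $L_{n-1}$; fix such an edge $\{a,b\}$, and let $G^{(n)}$ be $G^{(n-1)}$ together with one new vertex $n$ joined to precisely $a$ and $b$.

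The crucial structural fact is that a vertex of degree $2$ is \emph{inert}: if $v$ has neighbourhood exactly $\{a,b\}$ in a graph $F$, then every copy of $K_4$ minus an edge in $F$ that uses $v$ has the form $\{v,a,b,z\}$ with the missing edge incident to $v$ (one needs $\{a,b,z\}$ to span a triangle), and this forces $\{a,b\}\in F$. First I would promote this to an induction on $t$: for $0\le t\le L_{n-1}$ the vertex $n$ still has degree $2$ in $G^{(n)}_t$, and the restriction of $G^{(n)}_t$ to $[n-1]$ equals $G^{(n-1)}_t$. Indeed, inertness says that $n$ cannot lie in any $K_4$ that infects an edge inside $[n-1]$, so the process restricted to $[n-1]$ is unaffected by the presence of $n$; and $n$ can acquire a new edge only once $\{a,b\}$ is present, which by construction does not happen before step $L_{n-1}+1$, since $\{a,b\}\notin G^{(n-1)}_t$ for $t<L_{n-1}$.

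To finish, note that at step $L_{n-1}+1$ the restriction of $G^{(n)}_{L_{n-1}}$ to $[n-1]$ is $K_{n-1}$, so $a$ and $b$ have every vertex of $[n-1]\setminus\{a,b\}$ as a common neighbour; hence $\{n,c\}$ completes a new $K_4$ (on $\{n,c,a,b\}$) for every such $c$, giving $G^{(n)}_{L_{n-1}+1}=K_n\ne G^{(n)}_{L_{n-1}}$. Thus $G^{(n)}$ percolates, ends at $K_n$, and satisfies $L_n\ge L_{n-1}+1$; since it cannot keep changing after reaching $K_n$ we in fact get $L_n=L_{n-1}+1$, and with $L_4=1$ this yields $L_n=n-3$ and therefore $M_4(n)\ge n-3$. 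The only part needing care is the bookkeeping in the inductive step — verifying simultaneously that $n$ stays inert and that the two processes agree on $[n-1]$ — but this is routine once the degree-$2$ observation is in hand, so I do not anticipate a serious obstacle; the work is simply in setting up the recursion and carrying along the auxiliary facts (that the previous graph percolates and infects an edge at its final step). One could instead unwind the recursion into a fully explicit graph — two triangles sharing an edge, followed by a ``path'' of degree-$2$ vertices, each hung on a suitably chosen earlier edge — but the recursive formulation keeps the verification cleanest.
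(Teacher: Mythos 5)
Your proof is correct and follows essentially the same approach as the paper: build the graph recursively by attaching a new degree-two vertex to an edge that was infected at the last step of the previous process, and observe that such a vertex cannot participate in any $K_4^-$ (and hence cannot affect the process on the old vertex set) until the edge between its two neighbours appears. The paper phrases this via "the first time a $K_4^-$ is present in $G^+_t$ but not $G_t$" while you phrase it as the new vertex being "inert" while it has degree two, but the underlying argument and construction are identical.
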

\begin{proof}
The case $n=3$ is trivial. We shall show by induction that for each $n\ge 4$ there is an $n$-vertex graph $G$ that stabilizes
in time exactly $n-3$, for which $G_{n-3}=K_n$. The base case is trivial taking $G=K_4^-$, a complete graph on $4$ vertices with one edge deleted.

Suppose then that $n\ge 4$ and $G$ is such a graph; it remains to construct a corresponding graph $G^+$
on $n+1$ vertices. Let $xy$ be any edge of $G_{n-3}\setminus G_{n-4}$, and let $G^+=G+xz+yz$ be formed
from $G$ by adding one new vertex, $z$, and two new edges, $xz$ and $yz$.
Let $t\le n-3$ be the first time that there is a copy of $K_4^-$ present in $G^+_t$ but not in $G_t$.
Until this time, exactly the same edges are added in the percolation processes starting from $G$ and from $G^+$,
so $G_t^+=G_t+xz+yz$. Any $K_4^-$ present in this graph but not in $G_t$ must contain $z$, and hence
$x$, $y$ and one other vertex $w$. Since the edge $zw$ is missing, $xy$ must be present, so $t\ge n-3$.
Conversely, since $G_{n-3}$ is complete, $G_{n-3}^+$ consists of a complete graph on $n$ vertices
with an extra vertex $z$ joined to $x$ and to $y$, and it follows that all edges $zw$ are added in the next
step, so $G_{n-2}^+$ is complete, as required.
\end{proof} 

We now turn to prove the upper bound for Theorem~\ref{thm:maxTime4}
\begin{observation}
\label{obs:cliqueMerge}
Suppose that $K_1,K_2 \subseteq V(G)$ are such that  $K_1,K_2$ induce cliques in $G_t$ with $|K_1 \cap K_2| \geq 2$. Then $K_1 \cup K_2$ induces a clique in $G_{t+1}$.
\end{observation}

To prove that $M_4(n) \geq n-3$, we prove the following stronger statement by induction. Once we have established it, the upper bound will easily follow.

\begin{lemma}
\label{lem:K_4EdgesCompleteCliques}
Let $t \geq 1$ and suppose that $e = ij \in G_{t} \setminus G_{t-1}$ in the $K_4$-bootstrap process. Then for some $d = d(e) \in \{0,1\}$ the edge $e$ is contained in a clique of size $t+3+d$ in $G_{t+d}$.
\end{lemma}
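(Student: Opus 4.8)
The plan is to induct on $t$. For the base case $t=1$, an edge $e=ij$ first appearing in $G_1$ must complete a new copy of $K_4$ in $G_0$, so $i,j$ together with two other vertices form a $K_4$ in $G_1 = G_{t+0}$; thus $e$ lies in a clique of size $4 = t+3$ with $d=0$. For the inductive step, suppose the statement holds for all values smaller than $t$, and let $e=ij \in G_t \setminus G_{t-1}$. Then adding $e$ completes a new copy of $K_4$ in $G_{t-1}$, i.e.\ there exist vertices $k,\ell$ with $ik, i\ell, jk, j\ell, k\ell$ all in $G_{t-1}$ and (by newness) at least one of these five edges, say $f$, lying in $G_{t-1}\setminus G_{t-2}$ — in fact the copy is new precisely because some edge of the triangle-structure is new, and one checks that $k\ell$ cannot be the only new one unless $e$ was already forced earlier, so WLOG a suitable edge among $\{ik,i\ell,jk,j\ell,k\ell\}$ is new at time $t-1$.

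The key idea is then to apply the induction hypothesis to that new edge $f \in G_{t-1}\setminus G_{t-2}$: it lies in a clique $K$ of size $(t-1)+3+d' = t+2+d'$ in $G_{t-1+d'}$ for some $d'\in\{0,1\}$. The goal is to merge this large clique with the small clique (a $K_4$, or part of it) containing $e$, using Observation \ref{obs:cliqueMerge}, so that after one more step we get a clique of size roughly $t+3$ through $e$. The careful bookkeeping is: the $K_4$ on $\{i,j,k,\ell\}$ is present in $G_t$, the clique $K$ of size $t+2+d'$ through $f$ is present in $G_{t-1+d'}$, and these two cliques share the two endpoints of $f$ (which are two of the four vertices $i,j,k,\ell$), so $|K \cap \{i,j,k,\ell\}|\ge 2$. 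By Observation \ref{obs:cliqueMerge}, $K \cup \{i,j,k,\ell\}$ induces a clique in $G_{\max(t-1+d',\,t)+1}$. One then needs to verify that this clique has size at least $t+3+d$ in $G_{t+d}$ for an appropriate choice of $d\in\{0,1\}$: if $d'=0$ the merge happens at time $t+1$ giving a clique of size at least $(t+2) + (\text{new vertices among }i,j,k,\ell) \ge t+3$, while if $d'=1$ we get size at least $t+3$ but possibly only at time $t+1$, consistent with $d=1$. A subtle point is that we must always gain at least one vertex in the merge, which holds because $e=ij$ and the third/fourth vertices of the $K_4$ need not all lie in $K$; if they happen to already lie in $K$, then $K$ itself already contains $e$ and has the required size.

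I expect the main obstacle to be the case analysis around exactly which edge of the completing $K_4$ is "new" and controlling the timing indices so that the clique of size $t+3+d$ genuinely appears by time $t+d$ rather than a step or two later. In particular one must rule out degenerate configurations where the newly infected $K_4$ shares too much with the clique guaranteed by induction (so that no net growth occurs) and handle the possibility that $d'=1$ pushes the relevant clique to time $t+1$, which is why the lemma allows the slack parameter $d\in\{0,1\}$ in the first place. Once the lemma is established, the upper bound $M_4(n)\le n-3$ follows immediately: if the process is still active at time $t$, some edge lies in $G_t\setminus G_{t-1}$, hence (by the lemma) $G_{t+d}$ contains a clique on $t+3+d \le n$ vertices, forcing $t \le n-3$.
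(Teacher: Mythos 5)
Your overall strategy is the same as the paper's: induct on $t$, observe that the completing $4$-set $\{i,j,k,\ell\}$ must contain an edge $f$ new at time $t-1$, apply the inductive hypothesis to $f$ to get a clique $K$ of size $t+2+d'$ in $G_{t-1+d'}$, then merge $K$ with $\{i,j,k,\ell\}$ via Observation~\ref{obs:cliqueMerge}. However, there is a genuine gap in the critical case, which you anticipate but do not resolve: when $d'=0$ and exactly one vertex of $\{i,j,k,\ell\}$ lies outside $K$. Your merge of the cliques $K$ (size $t+2$, present in $G_{t-1}$, hence in $G_t$) and $\{i,j,k,\ell\}$ (present only in $G_t$) uses Observation~\ref{obs:cliqueMerge} at time $t$, producing a clique of size $(t+2)+1=t+3$ only in $G_{t+1}$. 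This is one short of the $t+4$ required at time $t+1$ if $d=1$, and one step too late for $d=0$. Both of the bounds you state (``size at least $t+3$'' at time $t+1$, for both $d'=0$ and $d'=1$) are in fact consistent with neither choice of $d$; you would need to show either size $t+3$ at time $t$, or size $t+4$ at time $t+1$. Also, your escape hatch (``if they happen to already lie in $K$, then $K$ itself already contains $e$'') is vacuous when $d'=0$: in that case $K\subseteq G_{t-1}$, so $\{i,j,k,\ell\}\subseteq K$ would put $ij\in G_{t-1}$, a contradiction; it cannot rescue the size-$t+3$-at-$t+1$ problem.

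The paper closes this case by applying Observation~\ref{obs:cliqueMerge} one step earlier. When $d'=0$ and $|K\cap\{i,j,k,\ell\}|=3$, the unique vertex of $\{i,j,k,\ell\}\setminus K$ must be an endpoint of $e$ (else $ij\in K^{(2)}\subseteq G_{t-1}$), say $j$. Since $K^{(2)}\setminus\{ij\}\subseteq G_{t-1}$, the vertex $j$ has two neighbours $j_1,j_2\in K\cap\{i,j,k,\ell\}$ already in $G_{t-1}$; thus $K$ and the triangle $\{j,j_1,j_2\}$ are both cliques in $G_{t-1}$ sharing $\ge 2$ vertices, so $K\cup\{j\}$ is a clique of size $t+3$ already in $G_t$, and $d=0$ works. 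In short, you have the right framework and you correctly identify the ``no net growth'' degenerate configuration as the obstacle, but the argument as written does not actually dispose of it; the missing idea is to merge at time $t-1$ using a small triangle rather than at time $t$ using the full $4$-set.
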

\begin{proof}
We prove the lemma by induction on $t$. The case $t = 1$ is immediate. So assume that $t \geq 2$ and that the statement is true for $t-1$. Now suppose that $e = ij \in G_{t} \setminus G_{t-1}$. We want to find $d(e) \in \{0,1\}$ such that $e$ is contained in a clique of size $t+3+d(e)$ in $G_{t+d(e)}$.

The edge $ij$ appears in $G_{t}$ by virtue of some $4$-element set $K$ such that $i,j \in K$ and $(K^{(2)} - ij) \subseteq G_{t-1}$, where $K^{(2)}$ is the complete graph on $K$.
Note that we cannot have $(K^{(2)} - ij) \subseteq G_{t-2}$, otherwise we would have $ij \in G_{t-1}$. So there is another edge $f$ of $K^{(2)}$ in $G_{t-1} \setminus G_{t-2}$. By induction there exists some $d(f)$ and a set $C \subseteq V(G)$ such that $f \in C^{(2)}$, $|C| \geq (t-1) +3+ d(f)$ and $C^{(2)} \subseteq G_{t-1+d(f)}$. We consider the following cases.
\begin{enumerate}
 \item Suppose first that $i,j \in C$. Then we cannot have $d(f) = 0 $ as this would imply that $C$ induces a clique in $G_{t-1}$ and hence the edge $e$ is in $G_{t-1}$, which contradicts the choice of $e$. Hence we have $d(f)=1$ and, taking $d(e) = d(f) - 1 = 0$, we observe that $C$ induces a clique in $G_{t+d(e)} = G_{t}$ of size $(t-1) + d(f) + 3 = t + d(e)+3 = t+3$ that contains $i,j$. Hence, we are done in this case.
 \item Suppose instead that at least one of the vertices $i,j$ is not in $C$. We further divide this case into two subcases.
 \begin{enumerate}
  \item If $d(f) = 1 $ then $K$ and $C$ both induce cliques in $G_{t-1+d(f)} = G_t$. Since $f \in K^{(2)} \cap C^{(2)}$, we have $|K \cap C| \geq 2 $ and, by Observation~\ref{obs:cliqueMerge}, at time $t + 1$, $K$ and $C$ will merge to form a single clique of size at least $|C|+1\ge t + 3+1$. Hence, we are done choosing $d(e) = 1$.
  \item If $d(f) = 0$ then both $K$ and $C$ induce cliques in $G_{t}$. If $|K \cap C | = 2$ then by Observation~\ref{obs:cliqueMerge} at time $t+1$ they merge to form a complete graph of size $|C|+2 \ge t + 1 + 3$ in $G_{t + 1}$. So again we are done taking $d(e) = 1$.
  
  If $|K \cap C | = 3$ then $C$ contains exactly one endpoint of $e$. Assume that $i \in C, j \notin C$. Since $K^{(2)} - e \subseteq G_{t-1}$, $j$ has two neighbours $j_1, j_2 \in K \cap C$ in $G_{t-1}$. This implies that both $C$ and $\{j, j_1, j_2\}$ induce cliques in $G_{t-1}$ and that they intersect in two points, i.e., in $j_1$ and $j_2$. Hence by Observation~\ref{obs:cliqueMerge} we have that $C \cup \{j\}$ induces a clique of size $|C|+1\ge t + 3$ in $G_{t}$. Hence, taking $d(e)=0$ we are done. This completes the proof of Lemma~\ref{lem:K_4EdgesCompleteCliques}.
 \end{enumerate}
\end{enumerate}
\end{proof}
 
Theorem~\ref{thm:maxTime4} follows easily from Lemma~\ref{lem:K_4EdgesCompleteCliques}. Let $T$ be the time at which the $K_4$-bootstrap process stabilizes starting from an arbitrary graph $G = G_0$. For any edge $e$, let $t(e)$ be the time at which the edge $e$ is added to the infected set. Now for all $e \in G_T$, we have that 
\[
 n \geq t(e) + d(e) + 3
\]
since by Lemma~\ref{lem:K_4EdgesCompleteCliques} we know that $G_{t(e) + d(e)}$, a graph on $n$ vertices, contains a clique of size $t(e)+d(e)+3$. Now since $d(e) \in \{0,1\}$, we have
\[
 n \geq \max_{e \in G_T} \{ t(e) + d(e) + 3 \} \geq \max_{e \in G_T} \{ t(e) + 3 \} = T + 3.
\]
This completes the proof Theorem~\ref{thm:maxTime4}.
 
\section{Lower bounds on $M_r(n)$ for $r \geq 5$}
\label{sec:r>=5}

In this section we use a probabilistic argument to prove Theorem~\ref{thm:maxTime5}. In other words, taking $\alpha_r = \frac{r-2}{\binom{r}{2}-2}$, we show that for every $\varepsilon > 0$ and for $n$ large enough (depending on $r$ and $\eps$) there exists a graph $G = G(r,n)$ such that the $K_r$-bootstrap process started from $G$ stabilizes after at least $n^{2-\alpha_r-\varepsilon}$ time steps.
Throughout we fix $r \geq 5$ and $\eps>0$; all constants in what follows may depend on $r$
and $\eps$.

The simplest possible way to construct a graph that takes a fairly long time
to stabilize in the $K_r$-bootstrap process is as follows. Start with a chain of $K_r$s
that are vertex disjoint except that each shares an edge with the next. More precisely,
consider a sequence $e_0,H_1,e_1,H_2,\ldots,e_{t-1},H_t,e_t$ where each $H_i$ is a copy of $K_r$,
each $e_i$ is an edge, and the entire collection of $e_i$s and $H_j$s
is vertex disjoint except that both $e_{i-1}$
and $e_i$ are (vertex disjoint) edges of $H_i$.
Take $G_0$ to be the union of all the $K_r$s in the chain, with all edges $e_i$ except $e_0$
deleted. Then, clearly, $e_i$ is the unique edge added at step $i$, and the process
stabilizes at time $t$. Of course, this construction only gives the rather weak
lower bound $M_r(n)\ge \lfloor (n-2)/(r-2) \rfloor=\Theta(n)$. 

To obtain a stronger bound, the idea is to relax the disjointness conditions, but in a way that
will not affect the percolation process. More precisely, by a \emph{$K_r$-chain} (within $K_n$) we mean
a sequence $H_1,H_2,\ldots,H_t$ where each $H_s$ is a complete graph on $r$ vertices (contained
within $K_n$), and $H_i$ and $H_j$ are edge-disjoint unless $|i-j|=1$, in which case they
share exactly one edge. Given such a chain we let $e_i$ be the edge shared by $H_i$ and $H_{i+1}$,
let $e_0$ be some edge of $H_1$ other than $e_1$, and let $e_t$ be some edge of $H_t$
other than $e_{t-1}$. Sometimes we describe the chain by the list
$e_0,H_1,e_1,H_2,\ldots,e_{t-1},H_t,e_t$. In this list, the edges $e_i$ are distinct, and $e_i$ is an edge
of $H_j$ if and only if $i\in\{j-1,j\}$.

Let $K_r^-$ denote the graph formed by deleting an edge from $K_r$.
Given a $K_r$-chain as above,
an \emph{external $K_r^-$} is a (not necessarily induced) subgraph of the graph union
$\bigcup_{i=1}^t H_i$ which is isomorphic to $K_r^-$ and is not contained in any individual $H_i$.
A $K_r$-chain is \emph{good} if it generates no external $K_r^-$.

\begin{observation}\label{obsgc}
If $e_0,H_1,e_1,H_2,\ldots,e_{t-1},H_t,e_t$ is a good $K_r$-chain, then the graph $G_0=\bigcup_{i=1}^t H_i
-\{e_1,e_2,\ldots,e_t\}$ stabilizes at time $t$ in the $K_r$-bootstrap process.
\end{observation}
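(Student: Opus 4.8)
The plan is to prove, by induction on $s$, that for every $0\le s\le t$ we have
$G_s = \bigcup_{i=1}^t H_i - \{e_{s+1},e_{s+2},\ldots,e_t\}$,
where the removed set is empty when $s=t$, so that $G_t=\bigcup_{i=1}^t H_i$. The base case $s=0$ is the definition of $G_0$. Granting this, the observation is immediate: for $0\le s<t$ the edge $e_{s+1}$ lies in $\bigcup_i H_i$ but is removed in forming $G_s$, so $G_s\subsetneq G_{s+1}$; and the inductive step will also show that no edge is added to $G_t=\bigcup_i H_i$, i.e.\ $G_{t+1}=G_t$, so the process stabilizes at time exactly $t$.

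For the inductive step, suppose the claim holds for some $s<t$ (the verification that $G_{t+1}=G_t$ is the same argument run with an empty removed set). First I would record exactly which edges of each $H_j$ are missing from $G_s$. Since the only members of the list $e_0,\ldots,e_t$ that are edges of $H_j$ are $e_{j-1}$ and $e_j$, and the $e_i$ are distinct, a routine case check on $j$ shows: $H_j$ is complete in $G_s$ when $j\le s$; exactly the edge $e_{s+1}$ is missing when $j=s+1$; and exactly the two edges $e_{j-1},e_j$ are missing when $j\ge s+2$. In particular $H_{s+1}-e_{s+1}\subseteq G_s$, so adding $e_{s+1}$ completes the new copy $H_{s+1}$ of $K_r$, whence $e_{s+1}\in G_{s+1}$.

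It remains to check that no other edge is added at step $s+1$. Suppose $e\notin G_s$ and $e$ is added; then some copy $K$ of $K_r$ in $G_s+e$ contains $e$, so $K-e\subseteq G_s\subseteq\bigcup_{i=1}^t H_i$. Thus $K-e$ is a copy of $K_r^-$ sitting inside $\bigcup_i H_i$, and since the chain is good it must be contained in a single $H_i$. As $K-e$ spans $r$ vertices and $|V(H_i)|=r$, we get $V(K)=V(H_i)$, so $e$ is an edge of the complete graph $H_i$; moreover $E(H_i)\setminus\{e\}=E(K-e)\subseteq G_s$, so $e$ is the \emph{unique} edge of $H_i$ missing from $G_s$. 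By the case analysis above the only $H_i$ with exactly one missing edge is $H_{s+1}$, with missing edge $e_{s+1}$, so $e=e_{s+1}$. Hence $G_{s+1}=G_s+e_{s+1}=\bigcup_i H_i-\{e_{s+2},\ldots,e_t\}$, completing the induction; and for $s=t$ there is no such $H_i$ at all, giving $G_{t+1}=G_t$.

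The whole argument is essentially bookkeeping about which edges are present at each time, together with one genuinely substantive point: the hypothesis that the chain is good is exactly what rules out an ``accidental'' copy of $K_r^-$ straddling two or more of the $H_i$, which is the only way an unexpected edge could be added. So the main (and only) obstacle is ensuring that all copies of $K_r^-$ in $\bigcup_i H_i$ are accounted for, and this is handled wholesale by the definition of a good chain.
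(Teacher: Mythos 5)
Your proof is correct and follows essentially the same inductive approach as the paper: both maintain the invariant $G_s=\bigcup_i H_i\setminus\{e_{s+1},\ldots,e_t\}$ (the paper phrases it equivalently as $G_s=G_0\cup\{e_1,\ldots,e_s\}$), and both use the good-chain hypothesis to argue that any $K_r^-$ in $G_s$ lies inside a single $H_j$, from which the bookkeeping on which of $e_{j-1},e_j$ are missing shows $e_{s+1}$ is the unique edge added.
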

\begin{proof}
Although this is immediate, let us spell out the details.
Define $G_t$ as in \eqref{eq:graphBootstrap}, with $H=K_r$. We claim that for $i\le t$
we have $G_i=G_0\cup\{e_1,\ldots,e_i\}$. Clearly this holds for $i=0$. Suppose it holds
for some $0\le i\le t-1$; then $G_i$ is a subgraph of $\bigcup_{j\le t}H_j$, so any copy of $K_r^-$
in this graph has vertex set $V(H_j)$ for some $j$. Within $V(H_j)$, exactly two
edges are missing in $G_0-e_0$, namely $e_{j-1}$ and $e_j$. Since (by induction) $G_i$
contains $e_k$ if and only if $k\le i$, we have an induced copy of $K_r^-$ if and only if $j=i+1$,
so $e_{i+1}$ is the unique edge added at step $i+1$. It follows that $G_0$ stabilizes at time $t$.
\end{proof}

Our aim is to show that we can fit a very long good $K_r$-chain into a set of $n$ vertices, i.e.,
to prove the following result. Here and in what follows we ignore rounding to integers when
it makes no essential difference.

\begin{theorem}\label{th2}
Let $r\ge 5$ and $\eps>0$ be given, and set $\alpha_r= \frac{r-2}{\binom{r}{2}-2}$.
If $n$ is large enough, then there is a good $K_r$-chain $e_0,H_1,e_1,\ldots,e_{T-1},H_T,e_T$
with $T = n^{2 - \alpha_r - \varepsilon}$ using at most $n$ vertices in total.
\end{theorem}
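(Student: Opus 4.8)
The plan is to build the chain greedily, one $K_r$ at a time, using a random construction and showing that with positive probability no external $K_r^-$ is ever created. I would proceed as follows. First, set aside two ``attachment'' vertices for each link of the chain: fix a long path of edges $e_0, e_1, \ldots, e_T$ on roughly $2T$ dedicated vertices (each $e_i$ disjoint from the others), so that $H_i$ is forced to contain $e_{i-1}$ and $e_i$. The remaining $r-2$ vertices of each $H_i$ — call them the \emph{interior} of $H_i$ — will be chosen at random from a common pool $V_0$ of $m \approx n$ ``interior vertices''. Thus each $H_i$ is determined by choosing an ordered or unordered $(r-2)$-subset $S_i \subseteq V_0$; the randomness is that the $S_i$ are chosen independently and uniformly (or: each $S_i$ a uniform random $(r-2)$-subset, independently across $i$).

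The heart of the argument is a first-moment computation bounding the expected number of ``bad events'': pairs $H_i, H_j$ (or small collections of the $H_i$) whose union contains a copy of $K_r^-$ not living inside a single $H_i$, together with the two degenerate kinds of collisions one must also forbid — namely $H_i$ and $H_j$ with $|i-j|\ge 2$ sharing an edge, and $H_i, H_{i+1}$ sharing more than the one forced edge $e_i$. An external $K_r^-$ has $r$ vertices and $\binom r2 - 1$ edges, each edge lying in some $H_k$; since each $H_k$ contributes at most $\binom r2$ edges and the copy is not inside one $H_k$, it must ``use'' at least two of the $H_k$'s, and one checks that it in fact involves some bounded number $\le \binom r2 - 1$ of them. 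For a fixed candidate vertex set and a fixed assignment of its edges to indices $k_1 < \cdots < k_\ell$, the probability that all the required edges are present factors (by independence of the $S_{k}$) into a product over the distinct interior-vertices that must be ``hit'', and this probability is at most $m^{-(\text{number of interior vertices forced into the }S_k\text{'s})}$. Summing over vertex sets (at most $\binom{O(T)}{r} \le T^r$ choices, up to constants, since there are $O(T)$ vertices total) and over the $O(T^{\ell-1})$ choices of the remaining indices once one is pinned down near a fixed location, one gets an expected count of the form $T^{r + \ell - 1} m^{-c}$ where $c$ counts interior slots used. The key inequality to extract is that $c$ is large enough — roughly $c \ge (\ell)(r-2) - O(r)$, because each $H_k$ beyond possibly one or two ``anchor'' links contributes essentially $r-2$ fresh random coordinates that all must be pinned to lie in the $r$-set — so that with $m = \Theta(n)$ and $T = n^{2-\alpha_r-\eps}$ the whole sum tends to $0$. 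Choosing $\alpha_r = \frac{r-2}{\binom r2 - 2}$ is exactly the exponent at which the worst bad configuration — presumably two links contributing a $K_r^-$ on $r-2+r-2 = 2r-4$ ``new'' interior vertices, wait, rather the configuration minimizing $c$ relative to the number of free index-parameters — is balanced, and verifying that this particular $\alpha_r$ is the correct threshold, i.e.\ that every bad configuration is killed, is where the real work lies.

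So the steps, in order, are: (1) fix the backbone path $e_0,\ldots,e_T$ on $\approx 2T$ vertices, reserve an interior pool $V_0$ of size $m = \Theta(n)$, and let $H_i = e_{i-1} \cup e_i \cup S_i$ with $S_i$ independent uniform $(r-2)$-subsets of $V_0$; (2) enumerate the types of bad events (external $K_r^-$; excess intersection of consecutive links; any intersection of non-consecutive links beyond what is allowed) and show each external $K_r^-$ spans a bounded number $\ell$ of links with $\ell \ge 2$; (3) for each type, bound its expected number by a sum of the shape $(\text{number of combinatorial configurations}) \times (\text{probability}) \le T^{r+\ell-1} \cdot m^{-c}$, computing $c$ carefully as the number of interior-vertex slots that a given configuration forces; (4) check that with $m = \Theta(n)$, $T = n^{2-\alpha_r-\eps}$, and $\sum_{i\le T}(r-2) = (r-2)T \le m \cdot n^{\,?}$ — in particular that one \emph{can} fit $T$ links, which needs $(r-2)T \lesssim$ (a polynomial factor times) the room, handled by letting $m$ grow polynomially and noting $T m^{r-2}$-style counting still works; actually one wants $(r-2)T = n^{2-\alpha_r-\eps}\cdot(r-2)$ and $m \approx n$, so each interior vertex is reused $\approx T/m = n^{1-\alpha_r-\eps}$ times, which is fine; (5) conclude by the first-moment method that a good chain on $\le 2T + m = O(n^{2-\alpha_r-\eps}) + O(n) \le n$ (for $\eps>0$, $n$ large, since $2-\alpha_r-\eps$ might exceed $1$ — here one must be slightly careful: if $\alpha_r < 1 - \eps$ the backbone alone has $\Theta(n^{2-\alpha_r-\eps})$ vertices which exceeds $n$!).

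The main obstacle — and the point that forces a cleverer construction than the naive one I sketched — is precisely that last vertex-budget issue: a chain of $T$ links in the naive layout uses $\Omega(T)$ vertices, so $T = n^{2-\alpha_r-\eps}$ with $2-\alpha_r-\eps > 1$ is impossible with dedicated backbone vertices. The fix must be to \emph{share the $e_i$-vertices too}, i.e.\ to let the whole chain live on a pool of only $\Theta(n)$ vertices with each vertex used in many links, so that the ``backbone'' structure is itself part of the random construction; then the delicate part becomes ensuring that consecutive links share exactly one edge and non-consecutive links share no edge (so that the chain is well-defined at all), on top of the no-external-$K_r^-$ condition, and balancing all these first-moment bounds simultaneously. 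I expect the core computation to be choosing the right number of ``levels'' or the right random model (perhaps assigning each link a random $r$-set from $[n]$ subject to a prescribed overlap pattern with its neighbour, engineered via a random injection or a random partition) so that the dominant bad event has expectation $n^{-\Omega(\eps)} \to 0$ exactly when $T \le n^{2-\alpha_r-\eps}$, and then reading off that the extremal bad event is a single external $K_r^-$ spanning two links, which has $r$ vertices, $\binom r2 - 1 - 2 = \binom r2 - 3$ wait — $\binom r2 - 1$ edges minus the (at most $2$) that are ``free'' in the two anchor edges — giving the ratio $(r-2)/(\binom r2 - 2)$ in the exponent. Pinning down that combinatorial extremal problem is the crux.
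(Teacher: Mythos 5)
Your proposal captures the general spirit --- a random chain, a cost/benefit analysis of how the chain can intersect an $r$-set, and the right formula $\alpha_r = \frac{r-2}{\binom r2 - 2}$ as the target exponent --- but the proposed method of proof has a genuine gap: a \emph{pure first-moment argument on a non-adaptively chosen random chain cannot reach $T = n^{2-\alpha_r-\eps}$}.

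To see this, do the calculation you sketch. Take the dominant bad configuration to be an $r$-set $A$ that is hit by $\ell = \binom r2 - 1$ pairwise non-consecutive links $H_{k_1},\ldots,H_{k_\ell}$, each intersecting $A$ in a single edge. (This, not the ``two-link'' event you guess at the end, is the extremal configuration; it is the combinatorial content of Lemma~\ref{bc1}/Corollary~\ref{bcl} in the paper, and it parallels the 2-balanced condition in small-subgraph problems.) Each link costs a factor $\Theta(n^{-2})$ in probability. Summing over $T^\ell$ choices of indices and $\binom nr$ choices of $A$ gives an expected count $\Theta(n^r T^{\binom r2 -1} n^{-2(\binom r2 - 1)})$, and demanding this be $o(1)$ forces $T \le n^{2 - r/(\binom r2 - 1) + o(1)}$, which is strictly smaller than $n^{2-\alpha_r}$ (for $r=5$: exponent $13/9 \approx 1.44$ versus the paper's $13/8 = 1.625$). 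Indeed, at $T = n^{2-\alpha_r-\eps}$ the same calculation gives an expected number of external $K_r^-$s of order $T$, not $o(1)$, so no variant of the first-moment or deletion method applied directly to the chain will do. Your estimate ``$c \ge \ell(r-2) - O(r)$'' is also much too optimistic --- the cost of a one-link hit in a single edge is $2$, not $r-2$, because only two of the $r-2$ freshly chosen vertices need to land in $A$; so the bad events are far more numerous than your bound suggests.

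The missing idea is that the chain must be built \emph{adaptively}, not all at once. The paper samples $m=\lceil\log n\rceil$ independent candidates for each new $H_{t+1}$ and takes one that avoids all currently ``dangerous'' sets (roughly: partial configurations that would become an external $K_r^-$). The first-moment machinery is then used not to rule out all external $K_r^-$s globally, but to prove the much weaker statement that at every time $t$ the number of dangerous $i$-sets is $o(n^i)$ (Lemma~\ref{B1}). This means a single fresh uniform candidate succeeds with probability $\ge 99/100$, and with $\log n$ independent tries the per-step failure probability drops to $n^{-2}$, which does survive a union bound over $T$ steps. The delicate dependence between steps is handled by bounding, via the events $\cB_{A,\cC}$, the probability that a bad meeting pattern is \emph{conceivable} for any selection rule. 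This adaptive scheme, together with the careful dangerous/deadly-set bookkeeping in conditions (C0)--(C4) and Claims~\ref{claim1}--\ref{claim2}, is the substance of the proof that your outline does not reach.

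You correctly diagnose the vertex-budget obstruction (a dedicated backbone of $\approx 2T$ vertices overshoots $n$ since $2-\alpha_r - \eps > 1$), and your suggested fix --- let the whole chain, backbone included, live on a common pool of $n$ vertices with heavy vertex reuse --- is indeed what the paper does, choosing $e_{t+1}$ as a random edge inside the random set $X_{t+1}$. But recognising the fix is needed is not the same as carrying it out: once vertices are shared, controlling both the chain structure (conditions C0--C1) and the absence of external $K_r^-$s requires exactly the adaptive ``dangerous sets'' framework above, and the cost/benefit inequality $\alpha_r(b(C)-1) \le c(C)-2$ of Lemma~\ref{bc1} is the precise statement you would need to pin down ``the combinatorial extremal problem'' you flag as the crux.
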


By Observation~\ref{obsgc} above, Theorem~\ref{th2} implies 
Theorem~\ref{thm:maxTime5}. The rest of this section is devoted 
to the proof of Theorem~\ref{th2}, for which we use a random construction. We outline
this informally before giving the formal proof. Throughout, $r\ge 5$ and $\eps>0$ are fixed,
$n$ is sufficiently large for various conditions that arise below to hold,
and $\alpha_r$ and $T$ are defined as in Theorem~\ref{th2}.

We will construct the $K_r$-chain randomly. There is a snag, however: if we follow
the obvious method (just pick $r-2$ new random vertices each time) then because there are many steps,
we are likely to get stuck (create an external $K_r^-$) relatively early, at a point where
each individual step is unlikely to cause a problem. Intuitively, we should
be able to keep going as long as there is a decent chance that the next step succeeds.
To make this precise, we consider a number $m=\ceil{\log n}$ of attempts at choosing
the next $r-2$ vertices, and continue if one of these succeeds.
Of course this leads to a lot of dependence (which choice succeeds depends on what happened at previous
steps). But we can get around this in the analysis by considering \emph{all possible} ways
that our choices could lead to an external $K_r^-$.

\medskip
\noindent {\bf Outline construction.}
Let $e_0$ be a uniformly random edge in $K_n$.
For $1\le t\le T$ and $1\le a\le m$, let $X_{t,a}$ be a uniformly random set of $r-2$
vertices of $K_n$, and $e_{t,a}$ a uniformly random edge within $X_{t,a}$,
with these choices independent over all $t$ and $a$.
For $t$ running from $0$ to $T-1$ do the following: pick an index $a=a_{t+1}$ so that certain
conditions specified later hold (if this is possible), set $X_{t+1}=X_{t+1,a}$ and 
$e_{t+1}=e_{t+1,a}$, and let $H_{t+1}$ be the complete graph on $e_t\cup X_{t+1}$.

\medskip
We think of $X_{t+1,a}$ as the $a$th \emph{candidate} set of new vertices in step $t+1$,
and $X_{t+1}$ as the selected set; similarly, $e_{t+1,a}$ is a candidate next edge
in the chain, and $e_{t+1}$ the actual next edge. A candidate set $X_{t+1,a}$
is \emph{successful} if it satisfies certain conditions \eqref{C0}--\eqref{C4} below.
Later we shall write $\cF_{t+1}$ for the
event that our construction \emph{fails} at step $t+1$, in the sense
that none of the candidates $X_{t+1,a}$ for $X_{t+1}$ is successful. We write
\[
 \cG_t = \cF_1^\cc\cap \cdots \cap\cF_t^\cc
\]
for the `good' event that the construction succeeds up to (at least) step $t$.

The first of the conditions referred to above is
\[
 \text{$X_{t+1}=X_{t+1,a_t}$ is disjoint from $e_t$.} \tag{C0}\label{C0}
\]
This condition ensures that $e_t\cup X_{t+1}$ is indeed a set of $r$ vertices.

We shall always write $G_t=\bigcup_{1\le s\le t} H_s$ for the graph constructed so far.
By a \emph{new $i$-set} of vertices we mean a set of $i\ge 2$ vertices of $H_{t+1}$
not all contained in $e_t$. Note that any `new' edge in the chain (i.e., edge present in $G_{t+1}$
but not in $G_t$) is a new 2-set. Our second condition is the following:
\[
 \text{No new $2$-set is contained in any $H_s$, $s\le t$.} \tag{C1}\label{C1}
\]
If conditions \eqref{C0} and \eqref{C1} hold at each step $t'\le t$, then $e_0,H_1,e_1,\ldots,H_t,e_t$
is a $K_r$-chain: each $H_s$ is a complete graph on $r$ vertices, and for $i<j$ (using \eqref{C1} at step $j$)
we see that $H_i$ and $H_j$ are edge-disjoint unless $j=i+1$, in which case they share
only one edge, $e_i$.

Given a set $A$ of vertices, we write $K_A$ for the complete graph with vertex set $A$,
i.e., the graph $(A,A^{(2)})$.
For $2\le i\le r-1$, we say that a set $A$ of $i$ vertices is \emph{dangerous at time $t$}
(or sometimes just \emph{dangerous})
if there is a set $B$ of $r$ vertices with $A\subset B$ and $B\ne V(H_s)$ for all $s\le t$
such that the graph $G_t\cup K_A$
includes all but at most one of the edges within $B$. In other words, $A$ is dangerous if adding
all edges within $A$ would create an external $K_r^-$. The next condition involves avoiding
such sets $A$:
\[
 \text{For $2\le i\le r-1$, no new $i$-set in $e_t\cup X_{t+1,a_t}$ was dangerous at time $t$.}\tag{C2}\label{C2}
\]
\begin{observation}
If, for each step $t$, $1\le t\le T$, we manage to choose a candidate $X_t=X_{t,a}$
so that conditions \eqref{C0}--\eqref{C2} hold, then $(e_0,H_1,\ldots,H_T,e_T)$ is a good
$K_r$-chain.
\end{observation}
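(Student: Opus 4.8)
The plan is a proof by contradiction. Granting conditions (C0)--(C2) at every step, the earlier remark that (C0) and (C1) at each step yield a $K_r$-chain already tells us that $(e_0,H_1,\ldots,H_T,e_T)$ is a genuine $K_r$-chain, so it only remains to show it generates no external $K_r^-$. Assume it does: there is an $r$-set $B$ of vertices with $B\ne V(H_s)$ for all $s\le T$ such that all but at most one of the $\binom r2$ edges inside $B$ lie in $G_T=\bigcup_{s\le T}H_s$.

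The first step is to pin down the moment this copy is completed. Since $G_t|_B$ is non-decreasing in $t$ and $G_1|_B=H_1|_B$ spans at most $\binom{r-1}2<\binom r2-1$ edges (because $B\ne V(H_1)$), there is a well-defined first time $t+1\ge 2$ at which $G_{t+1}|_B$ contains at least $\binom r2-1$ edges inside $B$. Put $A=B\cap V(H_{t+1})$. As $H_{t+1}$ is complete on its vertex set, $G_{t+1}|_B=G_t|_B\cup K_A$, so the external $K_r^-$ is completed precisely by the addition of the clique on $A$.

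Finally I would check that $A$ is one of the new $|A|$-sets of $e_t\cup X_{t+1}$ that was dangerous at time $t$, directly contradicting (C2) at step $t+1$. Minimality of $t+1$ forces $K_A$ to contribute an edge not already in $G_t|_B$, so $|A|\ge 2$; if $|A|=r$ then $B=A\subseteq V(H_{t+1})$ gives $B=V(H_{t+1})$, contradicting externality, so $2\le|A|\le r-1$; and if $A\subseteq e_t$ then $A=e_t$, whence $K_A=\{e_t\}\subseteq H_t\subseteq G_t$ and nothing is added to $G_t|_B$ at step $t+1$ — impossible. Thus $A$ is a genuine new $|A|$-set not contained in $e_t$. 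Moreover $B$ witnesses that $A$ is dangerous at time $t$: $A\subsetneq B$ since $|A|\le r-1$, $B\ne V(H_s)$ for every $s\le t$, and $G_t\cup K_A$ contains all but at most one edge inside $B$ by the choice of $t+1$. I do not expect a real obstacle: the argument is essentially bookkeeping about membership in $V(H_{t+1})$. The one point requiring care is the reduction to a single completion step — one must verify that every edge added inside $B$ at that step lies within $A=B\cap V(H_{t+1})$, which is exactly what lets the completion be phrased as adding $K_A$ and hence matched against the definition of ``dangerous''.
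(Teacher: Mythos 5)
Your proof is correct and takes essentially the same route as the paper's: the paper also locates the step at which the external $K_r^-$ is first completed, sets $A = B\cap V(H_t)$ for that step, observes that the freshly added edge forces $|A|\ge 2$, that externality forces $|A|<r$, and that $A$ is then a dangerous new $i$-set at the previous time, contradicting (C2). Your version simply makes explicit a couple of small bookkeeping points that the paper leaves implicit (the exact minimality criterion defining the completion step, the exclusion of $A\subseteq e_t$, and the verification that $G_{t+1}|_B = G_t|_B\cup K_A$), so it is a slightly more detailed write-up of the identical argument rather than a different approach.
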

\begin{proof}
As noted above, \eqref{C0} and \eqref{C1} ensure that we get a $K_r$-chain.
Suppose it is not good. Then
at some step $t$ an external $K_r^-$ must have been generated. Let its vertex set be $B$.
Clearly, $G_t\setminus G_{t-1}$ contains at least one edge within $B$. Hence $A=B\cap V(H_t)$
is a new $i$-set for some $i\ge 2$; also $i<r$ by definition of an external $K_r^-$.
But then $A$ was dangerous at step $t-1$, so condition \eqref{C2} did not hold.
\end{proof}

If steps $1,2,\ldots,t$ succeed, i.e., $\cG_t$ holds, then by definition conditions \eqref{C0}--\eqref{C2}
were satisfied at steps $1,\ldots,t$. Hence
\[
 \cG_t \text{ implies that $H_1,\ldots,H_t$ is a good $K_r$-chain}.
\]
Our aim is to show that with positive probability, we can satisfy conditions \eqref{C0}--\eqref{C2}
above. Since our construction is inductive, when considering the candidates for $X_{t+1}$ we
may assume that $\cG_t$ holds.

A key step in our proof is to show that there are not too many dangerous $i$-sets. More
generally, we shall show that for any graph $F$ with $r$ vertices, if we exclude
copies arising inside a single $H_s$, it is unlikely that $G_t$ contains many more
copies of $F$ than a random graph with the same overall edge density as $G_t$.
To show this, we fix a set $A$ of $r$ vertices, and think about all possible ways
that the chain $H_1,H_2,\ldots,H_t$ can meet $A$, considering only cliques $H_s$
that contribute an edge within $A$, i.e., satisfy $|V(H_s)\cap A|\ge 2$.
Since successive $K_r$s in the chain are related in a different way from those further apart,
we shall split the set of $K_rs$ in the chain that meet $A$ into subchains. This (hopefully)
motivates the following definition.

Let $A$ be a set of $r$ vertices. By a \emph{chain within $A$} we mean a sequence
$(S_1,\ldots,S_k)$ of subsets of $A$ such that
\smallskip

(i) $2\le |S_i|\le r-1$ for $1\le i\le k$ and

\smallskip
(ii) $|S_i\cap S_{i+1}|\le 2$ for $1\le i\le k-1$.

\smallskip\noindent
By a (partial) \emph{chain cover} $\cC$ of $A$ we mean a sequence
$(S_1^{1},\ldots, S_{k_1}^1)$, $(S_1^2,\ldots, S_{k_2}^2), \ldots , (S^{\ell}_1,\ldots, S^\ell_{k_\ell})$
of chains within $A$.
Such a chain cover is \emph{minimal} if every $S_i^j$ spans some edge not spanned by any other
set $S_{i'}^{j'}$ in the chain cover. Clearly, a minimal chain cover contains at most
$\binom{r}{2}$ sets $S_i^j$ in total, so there are a finite number (depending on $r$) of 
minimal chain covers of any $r$-set $A$. Also, if $\cC$ is minimal,
then each of its constituent chains is minimal in the natural sense (i.e.,
as a chain cover with one chain).

The \emph{cost} of a chain $C=(S_1,\ldots,S_k)$ is
\[
 c(C) = |S_1| + \sum_{i=1}^{k-1} |S_{i+1}\setminus S_i|,
\]
informally corresponding to the number of `new' vertices in the chain.
The \emph{edge set} $E(C)$ of $C$ is simply $\bigcup_{i=1}^k S_i^{(2)}$.
The \emph{benefit} $b(C)$ of $C$ is $|E(C)|$.
For a chain cover $\cC=(C_1,\ldots,C_j)$ we define its cost and benefit by
$c(\cC)=\sum c(C_i)$ and $b(\cC)=\sum b(C_i)$. (For $b(\cC)$ it might be more natural
to take the total number of edges without repetition;
this makes no difference in the argument below.)

Given a set $A$ of $r$ vertices and a chain cover
$\cC=((S_1^{1},\ldots, S_{k_1}^1), (S_1^2,\ldots, S_{k_2}^2), \ldots , (S^{\ell}_1,\ldots, S^\ell_{k_\ell}))$ of $A$,
we say that our random $K_r$-chain $H_1,\ldots,H_t$ \emph{meets $A$ in $\cC$} if there
exist $0\le t_1<\cdots<t_\ell$ such that
\smallskip

$t_j+k_j \le t_{j+1}-1$ for each $1\le j\le \ell-1$ and $t_\ell+k_\ell\le t$, and

\smallskip
$V(H_{t_j+i})\cap A=S_i^j$ for each $j\le \ell$ and $1\le i\le k_j$.

\smallskip\noindent 

In other words, our $K_r$-chain contains $\ell$ intervals of consecutive $K_r$s, the $j$th running from time $t_j+1$
to time $t_j+k_j$,
with gaps between the intervals, so that in each interval the intersections of the $K_r$s with $A$ form
the $j$th chain of $\cC$. (We impose no condition on how other cliques $H_s$ may meet $A$.)

\begin{observation}\label{omeet}
Suppose that $\cG_t$ holds. Let $A$ be a set of $r$ vertices
spanning at least $e$ edges in $G_t=\bigcup_{s\le t} H_s$,
with $A\ne V(H_s)$ for each $s\le t$.
Then there is some minimal chain cover $\cC$ with $b(\cC)\ge e$ such that $(H_s)$ meets $A$ in $\cC$.
\end{observation}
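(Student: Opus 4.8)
The plan is to describe how the cliques $H_s$ meeting $A$ (in the sense that $|V(H_s)\cap A|\ge 2$) are organised, then extract a minimal chain cover recording this structure. First I would let $s_1<s_2<\cdots<s_p$ be exactly those indices $s\le t$ with $|V(H_s)\cap A|\ge 2$; these are the cliques that contribute at least one edge within $A$. Since every edge of $G_t$ inside $A$ comes from some $H_s$, and such an $H_s$ spans an edge in $A$ only if $|V(H_s)\cap A|\ge 2$, the sets $V(H_{s_1})\cap A,\ldots,V(H_{s_p})\cap A$ together span all (at least $e$) edges of $G_t$ inside $A$. Each $V(H_{s_u})\cap A$ has size at least $2$; it has size at most $r-1$ because $A\ne V(H_s)$ for every $s$ (and $|A|=|V(H_s)|=r$), which is exactly condition (i) in the definition of a chain within $A$.

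Next I would break the index sequence $s_1<\cdots<s_p$ into maximal runs of \emph{consecutive} integers: say the runs are $[t_j+1,t_j+k_j]$ for $j=1,\ldots,\ell$, with $t_1\ge 0$, with $t_j+k_j\le t_{j+1}-1$ between consecutive runs (since the runs are maximal and separated), and with $t_\ell+k_\ell\le t$. Within the $j$th run, consecutive cliques $H_{t_j+i}$ and $H_{t_j+i+1}$ share exactly one edge (they are consecutive in the original $K_r$-chain), so $|(V(H_{t_j+i})\cap A)\cap(V(H_{t_j+i+1})\cap A)|\le 2$, giving condition (ii). Setting $S_i^j=V(H_{t_j+i})\cap A$ therefore produces a chain cover $\cC=((S^1_1,\ldots,S^1_{k_1}),\ldots,(S^\ell_1,\ldots,S^\ell_{k_\ell}))$ which $(H_s)$ meets by construction, and whose edge set $\bigcup_{j,i}(S_i^j)^{(2)}$ is the whole set of edges of $G_t$ inside $A$, so $b(\cC)\ge e$. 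The only remaining issue is minimality: the $S_i^j$ need not each span a private edge. To fix this, I would greedily delete sets: while some $S_i^j$ spans no edge outside the union of the $(S_{i'}^{j'})^{(2)}$ over the other surviving sets, remove it. This may split a chain $(S_1^j,\ldots,S_{k_j}^j)$ into two subchains (if an interior set is deleted), but each piece still satisfies (i) and (ii), so the result is still a chain cover; deletions never decrease $E(\cC)$ since we only delete redundant sets; and the process terminates, leaving a minimal chain cover $\cC'$ with $E(\cC')=E(\cC)$, hence $b(\cC')\ge e$.

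I expect no serious obstacle here — the statement is essentially a bookkeeping exercise — but the two points needing care are: (a) verifying that the reduction to a minimal cover does not destroy the ``meets'' property (it does not, because a subchain of a chain that is met is itself met, via the same times $t_j+i$; one just re-indexes the surviving runs and possibly introduces a new ``gap'' where a set was removed, and the inequalities $t_j+k_j\le t_{j+1}-1$ and $t_\ell+k_\ell\le t$ are preserved or only made easier), and (b) confirming that $2\le|S_i^j|\le r-1$ genuinely uses the hypothesis $A\ne V(H_s)$ to rule out $|S_i^j|=r$, and uses $|V(H_s)\cap A|\ge 2$ (our selection criterion) for the lower bound. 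The role of the hypothesis $\cG_t$ is simply to guarantee that $H_1,\ldots,H_t$ really is a $K_r$-chain, so that ``consecutive'' cliques share exactly one edge and non-consecutive ones are edge-disjoint — which is what makes the intersections $S_i^j$ behave as required.
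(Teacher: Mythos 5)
Your proposal is correct and follows essentially the same approach as the paper's proof: identify the cliques meeting $A$ in at least two vertices, split them into maximal runs of consecutive indices to form the chains, use $\cG_t$ to bound $|S_i \cap S_{i+1}|$ and $A \ne V(H_s)$ to bound $|S_i^j| \le r-1$, and pare down to a minimal cover. The only difference is that the paper starts from a minimal subset of cliques covering the edges of $G_t$ inside $A$ and then forms the chain cover, whereas you form the full cover first and then greedily delete redundant $S_i^j$; these are equivalent, and your extra attention to the fact that deleting an interior set merely splits a run into two valid runs with a new gap is a fair point worth spelling out.
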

\begin{proof}
Simply consider a minimal subset of the cliques $H_s$ that between them contain all
$\ge e$ edges of $G_t$ inside $A$, and split this set of cliques into intervals with gaps of
at least one between them. Each of these $H_s$ shares at least two and at most $r-1$ vertices with $A$,
and consecutive $H_s$ meet in at most two vertices, since $\cG_t$ holds so
$(H_1,\ldots,H_t)$ is a $K_r$-chain.
\end{proof}

Lemma~\ref{bc1} and its consequence Corollary~\ref{bcl} below are key to our analysis;
the latter says, roughly speaking,
that the `easiest' way for a random $K_r$-chain of length $T$
to cover a set $A$ to form an external $K_r^-$ (or some other graph $F$) is by meeting
it $\binom{r}{2}-1$ times ($e(F)$ times) in non-consecutive cliques $H_i$, each time in just one edge.
This condition is loosely
analogous to the `2-balanced' condition that appears in many small subgraph problems.
We start by showing that for a single chain, a certain `cost-benefit ratio' is maximized
in the single-edge case.

\begin{lemma}\label{bc1}
Let $A$ be a set of $r$ vertices, and $C=(S_1,\ldots,S_k)$ a minimal chain within $A$. Then 
$\alpha_r (b(C)-1) \le c(C)-2$.
\end{lemma}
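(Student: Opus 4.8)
The plan is to induct on the length $k$ of the chain, tracking the quantity $c(C) - 2 - \alpha_r(b(C)-1)$ and showing it is always nonnegative. For the base case $k=1$ we have a single set $S_1$ with $|S_1| = s$ where $2 \le s \le r-1$, so $c(C) = s$ and $b(C) = \binom{s}{2}$; the claim becomes $\alpha_r(\binom{s}{2} - 1) \le s - 2$. Since $\alpha_r = \frac{r-2}{\binom{r}{2}-2}$, this is exactly equality at $s = r-1$ (the ``single-edge case'' is $s=2$, where both sides are $0$), and for $2 \le s \le r-1$ one checks the inequality $\frac{s-2}{\binom{s}{2}-1} \ge \frac{r-2}{\binom{r}{2}-2}$ directly — the function $s \mapsto (s-2)/(\binom{s}{2}-1) = \frac{s-2}{(s-2)(s+1)/2} = \frac{2}{s+1}$ is decreasing in $s$, so it suffices to compare the value at $s=r-1$ with $\alpha_r$, and in fact $\frac{2}{r} \ge \frac{r-2}{\binom{r}{2}-2}$ reduces to $\binom{r}{2} - 2 \ge \frac{r(r-2)}{2} = \binom{r}{2} - \frac r2$, i.e. $r \ge 4$, which holds.

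For the inductive step, suppose the claim holds for the chain $C' = (S_1,\ldots,S_{k-1})$ and consider appending $S_k$. Write $p = |S_k \setminus S_{k-1}|$ (the number of genuinely new vertices, so $0 \le p \le |S_k|-|S_k \cap S_{k-1}| \le r-3$ using $|S_k \cap S_{k-1}| \le 2$ from condition (ii)). Then $c(C) = c(C') + p$, and $b(C) = b(C') + |S_k^{(2)} \setminus E(C')|$. The number of new edges contributed by $S_k$ is at most $\binom{|S_k|}{2} - \binom{|S_k \cap S_{k-1}|}{2}$, but I must be more careful: by \emph{minimality} of the chain cover, $S_k$ spans some edge spanned by no other $S_i$, so $S_k$ contributes at least one new edge. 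The key estimate is to bound the new edges in terms of $p$: each new edge of $S_k$ has at least one endpoint in $S_k \setminus S_{k-1}$ (since an edge with both endpoints in $S_k \cap S_{k-1}$ already lies in $S_{k-1}^{(2)} \subseteq E(C')$), so the number of new edges is at most $\binom{p}{2} + p|S_k \cap S_{k-1}| \le \binom{p}{2} + 2p$. Thus it suffices to verify $\alpha_r\bigl(\binom p2 + 2p\bigr) \le p$ for $0 \le p \le r-3$, i.e. $\alpha_r\bigl(\frac{p-1}{2} + 2\bigr) \le 1$ when $p \ge 1$ (the $p=0$ case forces zero new edges, contradicting minimality unless $k$ handled separately, but then actually $p=0$ cannot occur in a minimal chain), i.e. $\alpha_r \cdot \frac{p+3}{2} \le 1$. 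The worst case is $p = r-3$, giving $\alpha_r \cdot \frac r2 \le 1$, which is again $\binom r2 - 2 \ge \frac{r(r-2)}{2}$, equivalent to $r \ge 4$. Adding the inductive inequality for $C'$ to this then gives $\alpha_r(b(C)-1) \le \alpha_r(b(C')-1) + \alpha_r \cdot (\text{new edges}) \le (c(C')-2) + p = c(C)-2$.

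The main obstacle I anticipate is getting the bound ``new edges $\le \binom p2 + 2p$'' exactly right and confirming that the extremal configuration (all new sets being single edges) is genuinely the worst — in particular handling the interaction between the $|S_k \cap S_{k-1}| \le 2$ constraint and minimality, and making sure no subtlety arises when $S_k \cap S_{k-1}$ has size $0$ or $1$ (in which case an even cruder bound suffices since there are fewer old vertices to connect to). A secondary point to check is that in a \emph{minimal} chain we cannot have $p = 0$ for any appended set: if $S_k \subseteq S_{k-1}$ then $S_k$ spans no edge outside $E(C')$, contradicting minimality, so indeed $p \ge 1$ throughout and the division above is legitimate. Everything else is the elementary arithmetic of comparing $\alpha_r$ with $2/r$, which reduces cleanly to $r \ge 4$.
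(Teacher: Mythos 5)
Your proof goes by induction on $k$, tracking the cost/benefit increment when appending $S_k$. This is a genuinely different route from the paper, which argues globally: it rewrites the inequality, uses the crude bound $b\le\binom{c}{2}$, splits on the value of $c$, and invokes minimality only in the one borderline case $c=r$. Your inductive version uses minimality throughout (both to keep $(S_1,\dots,S_{k-1})$ minimal so the inductive hypothesis applies, and to guarantee $p\ge1$), and it makes the ``efficiency loss per new vertex'' visible step by step; that is a nice feature. But the bookkeeping of how $S_k$ overlaps $S_{k-1}$ is where the write-up runs into trouble.

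The gap is in the inductive step, precisely where you flag that care is needed. You assert $p\le|S_k|-|S_k\cap S_{k-1}|\le r-3$ ``using $|S_k\cap S_{k-1}|\le2$'', but this is backwards: setting $q=|S_k\cap S_{k-1}|$, we have $p=|S_k|-q\le(r-1)-q$, so $p\le r-3$ only when $q=2$; in general $p$ can be as large as $r-2$ (take $|S_{k-1}|=2$, $|S_k|=r-1$, $q=1$). This matters, because the inequality you reduce to, $\alpha_r\bigl(\tbinom{p}{2}+2p\bigr)\le p$, i.e.\ $\alpha_r\tfrac{p+3}{2}\le1$, is \emph{false} at $p=r-2$: the left-hand side is $\frac{(r-2)(r+1)}{r^2-r-4}=\frac{r^2-r-2}{r^2-r-4}>1$. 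So using the coarse bound ``new edges $\le\binom{p}{2}+2p$'' together with the coarse range ``$p\le r-3$'' does not close the argument. The fix is to keep $q$ in play and use the joint constraints: new edges $\le\binom{p}{2}+pq$ with $p+q\le|S_k|\le r-1$ and $q\le 2$. Then for $p\ge1$ the required inequality is $\alpha_r\tfrac{p-1+2q}{2}\le1$, and $p-1+2q=(p+q)+(q-1)\le(r-1)+1=r$, so it reduces to $\alpha_r\tfrac r2\le1$, which is equivalent to $r\ge4$. A smaller slip in the base case: equality in $\alpha_r\bigl(\tbinom{s}{2}-1\bigr)\le s-2$ at $s=r-1$ holds only when $r=4$; what is always an equality is the case $s=2$.
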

\begin{proof}
Recalling the definition of $\alpha_r$, and writing $b=b(C)$ and $c=c(C)$, the inequality
claimed can be rewritten as
\begin{equation}\label{bcaim}
 (r-2) (b-1) \le \left(\tbinom{r}{2}-2\right)(c-2).
\end{equation}
Now $|\bigcup S_i|\le c$, and $b = |\bigcup S_i^{(2)}| \le |(\bigcup S_i)^{(2)}|$,
so $b\le \binom{c}{2}$.
The inequality
\[
 (r-2)\left(\tbinom{c}{2}-1\right) \le \left(\tbinom{r}{2}-2\right)(c-2)
\]
is an equality for $c=2$, and is easily seen (by multiplying out) 
to hold (strictly) for $c=r-1$.
Hence (since the left-hand side is convex and the right-hand side linear)
it holds for $2\le c\le r-1$; since $b\le \binom{c}{2}$ this proves \eqref{bcaim} in these
cases.

For $c\ge r+1$ we use the trivial bound $b\le \binom{r}{2}$; it is easy to check
that this implies \eqref{bcaim} in this case. This leaves only the case $c=r$.
In this case \eqref{bcaim} reduces to $b\le \binom{r}{2}-1$, i.e., we must show that
if $c(C)=r$ then there is at least one edge in $A^{(2)}$ missing from $E(C)$.
To see this, recall first that $|S_i|\le r-1$ by definition, so $k\ge 2$.
Also, if $|\bigcup S_i|<c=r$, then clearly $b(C)\le \binom{r-1}{2}<\binom{r}{2}-1$.
Hence, we may assume that $\bigcup S_i=A$ and that only consecutive sets $S_i$
meet. By minimality there is some
$u\in S_1\setminus S_2$, and some $v\in S_k\setminus S_{k-1}$. But then $uv\notin E(C)$
and we are done.
\end{proof}

\begin{remark*}
We don't need it here, but in fact we have strict inequality in the result above unless
$k=1$, $c=2$ and so $b=1$. To see this we must show in the final case above that
at least two edges are missing, which is not hard.
\end{remark*}

\begin{corollary}\label{bcl}
Let $\cC$ be a minimal chain cover of some $r$-set $A$ consisting of $\ell$ chains
and having total cost $c$ and benefit $b$. Then $T^\ell n^{-c} \le n^{-\alpha_r b -\eps}$.
\end{corollary}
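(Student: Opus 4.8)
The plan is to take logarithms to base $n$ and thereby reduce the claimed inequality to a linear inequality in the cost $c$, the benefit $b$, and the number $\ell$ of chains; the latter inequality then follows by summing the single-chain estimate of Lemma~\ref{bc1}. Concretely, since $T = n^{2-\alpha_r-\eps}$ and $n>1$, the bound $T^\ell n^{-c} \le n^{-\alpha_r b - \eps}$ is equivalent to
\[
 \ell(2 - \alpha_r - \eps) - c \le -\alpha_r b - \eps,
\]
which rearranges to $\alpha_r b - c \le \alpha_r \ell - 2\ell + (\ell - 1)\eps$.

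Next I would write $\cC = (C_1,\ldots,C_\ell)$ and recall that, as noted just before the statement, minimality of $\cC$ forces each constituent chain $C_i$ to be minimal on its own, so Lemma~\ref{bc1} applies to each $C_i$. It gives $\alpha_r(b(C_i)-1) \le c(C_i)-2$, i.e.\ $\alpha_r b(C_i) - c(C_i) \le \alpha_r - 2$. Summing over $i=1,\ldots,\ell$ and using $b(\cC)=\sum_i b(C_i)$ and $c(\cC)=\sum_i c(C_i)$ yields $\alpha_r b - c \le \ell(\alpha_r-2) = \alpha_r\ell - 2\ell$.

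Finally I would close the argument by observing that any chain cover to which we apply the corollary is nonempty — it enters the proof only via Observation~\ref{omeet}, with $b(\cC)\ge e\ge 1$ — so $\ell\ge 1$ and hence $(\ell-1)\eps\ge 0$. Therefore $\alpha_r b - c \le \alpha_r\ell - 2\ell \le \alpha_r\ell - 2\ell + (\ell-1)\eps$, which is exactly the inequality we reduced to.

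I do not expect a genuine obstacle here: all the combinatorial substance sits in Lemma~\ref{bc1}, and the corollary is essentially a repackaging of it together with the definition $T=n^{2-\alpha_r-\eps}$. The only things to watch are the elementary $\eps$-bookkeeping when passing between the two forms of the inequality, and the harmless degenerate case $\ell=0$ (for which the statement as literally written fails, so one must record that the relevant chain covers have $\ell\ge 1$).
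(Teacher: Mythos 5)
Your proof is correct and is essentially the paper's own argument: apply Lemma~\ref{bc1} to each chain $C_i$, sum to get $\alpha_r(b-\ell)\le c-2\ell$, and then substitute $T=n^{2-\alpha_r-\eps}$ and use $\ell\ge 1$ to absorb the $\eps$ term. The only cosmetic difference is that you take $\log_n$ of the target inequality at the outset whereas the paper rearranges first and exponentiates at the end; your explicit note that $\ell\ge1$ is needed (and is guaranteed by the way the corollary is used) is a sound observation that the paper also makes, more tersely.
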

\begin{proof}
If $\cC=(C_1,\ldots,C_\ell)$ then $c=\sum_{i=1}^\ell c(C_i)$ and
$b=\sum_{i=1}^\ell b(C_i)$. Applying Lemma~\ref{bc1} to each chain $C_i$ and summing,
we see that $\alpha_r(b-\ell) \le c-2\ell$. Rearranging gives
\[
 (2-\alpha_r)\ell - c \le -\alpha_r b.
\]
The result follows recalling that $T=n^{2-\alpha_r-\eps}$ and noting that $\ell\ge 1$.
\end{proof}

Fix a set $A$ of $r$ vertices. Given a minimal
chain $C=(S_1,\ldots,S_k)$ within $A$ and an integer $0\le t\le T-k$,
let
\[
 [A,t,C]
\]
denote the event that there are indices $a_t,\ldots,a_{t+k}$ such that if we set
$V_s=e_{s-1,a_{s-1}}\cup X_{s,a_s}$ for $s=t+1,\ldots,t+k$, then we have $V_{t+j}\cap A=S_j$
for $j=1,\ldots,k$. In other words, $[A,t,C]$ is the event that it is conceivable
(considering all \emph{a priori} possible choices for which candidate set/edge is chosen at 
each step) that the consecutive $K_r$s $H_{t+1},\ldots,H_{t+k}$ meet $A$ in the chain $C$.
Recalling the definition of $c(C)$, we see that for this event to hold, we need $c$ of the relevant
candidate vertices to fall in $A$.
Taking a union bound over the choices of $a_t,\ldots,a_{t+k}$, and noting that (by minimality),
$k$ is bounded (by $\binom{r}{2}$, say), we see that
\begin{equation}\label{PAtC}
 \Pr([A,t,C]) \le m^{k+1} M_{k,r} n^{-c(C)} = O^*(n^{-c(C)})
\end{equation}
for some constant $M_{k,r}$ depending on $k$ and $r$. Here, as usual, the $O^*$ notation hides
powers of $\log n$ that (with $r$ fixed) are bounded.

For $\cC=(C_1,\ldots,C_\ell)$ a minimal
chain cover of $A$ with each chain $C_j$ having length $k_j$,
let $\cB_{A,\cC}$ be the event that there exist $0\le t_1<t_2<\cdots <t_\ell$ such that
$t_j+k_j \le t_{j+1}-1$ for each $1\le j\le \ell-1$ and $t_\ell+k_\ell\le T$, and
\begin{equation}\label{AtC}
 \text{$[A,t_j,C_j]$ holds for $j=1,2,\ldots,\ell$.}
\end{equation}
Note that if the $K_r$-chain $(H_s)_{s=1}^t$ meets $A$ in $\cC$, then $\cB_{A,\cC}$ holds.
Indeed, informally speaking,
$\cB_{A,\cC}$ is the event that is it conceivable that our chain might meet $A$ in $\cC$.
The advantage of working with $\cB_{A,\cC}$ is that it avoids the complicated dependence introduced
by the rule for selecting which attempt at the next $K_r$ becomes the actual next $K_r$.

Since $[A,t_j,C_j]$ depends only on the random variables $X_{t,a}$, $t_j\le t\le t_j+k_j$, $1\le a\le m$,
with $\cC$ and the $t_j$ fixed the $\ell$ events appearing in \eqref{AtC} are independent.
Hence, using \eqref{PAtC} and summing over the at most $T^\ell$ choices for $t_1,\ldots,t_\ell$,
we have
\[
 \Pr(\cB_{A,\cC}) = O^*(T^\ell n^{-c(C_1)-c(C_2)\cdots-c(C_\ell)}) = O^*(T^\ell n^{-c(\cC)}) = O^*(n^{-\alpha_r b(\cC) -\eps}),
\]
where the final step follows from Corollary~\ref{bcl}.

For $1\le e\le \binom{r}{2}$,
let $\cB_{A,e}$ be the event that there exists some minimal chain cover of $A$ with $b(\cC)\ge e$
such that $\cB_{A,\cC}$ holds.
Since there are $O(1)$ minimal chain covers of $A$, we see that
\begin{equation}\label{PBAe}
 \Pr(\cB_{A,e}) = O^*(n^{-\alpha_r e -\eps}).
\end{equation}
By Observation~\ref{omeet}, for any $t\le T$, if $\cG_t$ holds
and $A$ spans at least $e$ edges in $G_t=\bigcup_{s\le t} H_s$,
with $A\ne V(H_s)$ for each $s\le t$,
then $\cB_{A,e}$ holds. This allows us to prove our key lemma on the number of dangerous $i$-sets.

Recall that a set $A$ of $i$ vertices is dangerous at time $t$ if adding all edges inside $A$
to $G_t$ would create an external copy of $K_r^-$ (or of $K_r$) with vertex set $B\supset A$.
We will consider the `bad' event
\[
 \cB^1_t = \cG_t\cap\{ \text{ for some $2\le i\le r-1$,
 there are more than $n^{i-\eps/2}$ dangerous $i$-sets at time $t$. }\}
\]
\begin{lemma}\label{B1}
For $r\ge 5$ and $\eps>0$ fixed, setting $T=n^{2-\alpha_r-\eps}$, we have
$\Pr(\bigcup_{t\le T} \cB^1_t) = o(1)$.
\end{lemma}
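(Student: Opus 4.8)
The plan is to avoid a union bound over all $T$ time steps (which is hopeless, since $T$ is polynomial in $n$) by reducing, for each fixed $i$ with $2\le i\le r-1$, the number of dangerous $i$-sets at any time $t$ to a \emph{single} random variable $N_i$ that does not depend on $t$, and then bounding $N_i$ by a first-moment estimate. Set $e(i)=\binom{r}{2}-\binom{i}{2}-1$; since $i\le r-1$ we have $r-2\le e(i)\le\binom{r}{2}-2$, so $\cB_{B,e(i)}$ is defined for every $r$-set $B$. The key observation is that danger is certified by a dense $r$-set: if $A$ is dangerous at time $t$, witnessed by $B\supset A$ with $|B|=r$, $B\ne V(H_s)$ for all $s\le t$, and $G_t\cup K_A$ missing at most one edge of $K_B$, then — the $\binom{i}{2}$ edges inside $A$ being automatically present — at most one of the $\binom{r}{2}-\binom{i}{2}$ edges of $B$ outside $A$ is absent from $G_t$, so $B$ spans at least $e(i)$ edges of $G_t$. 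Since $\cG_t$ holds, Observation~\ref{omeet} then produces a minimal chain cover $\cC$ with $b(\cC)\ge e(i)$ such that $(H_s)$ meets $B$ in $\cC$, so $\cB_{B,e(i)}$ holds. Letting $N_i$ count the $r$-sets $B$ with $\cB_{B,e(i)}$ (a variable independent of $t$), and noting each $r$-set has only $\binom{r}{i}$ many $i$-subsets, we get that on $\cG_t$ there are at most $\binom{r}{i}N_i$ dangerous $i$-sets. Hence $\bigcup_{t\le T}\cB^1_t\subseteq\bigcup_{i=2}^{r-1}\{N_i>n^{i-\eps/2}/\binom{r}{i}\}$, a union of only $O(1)$ events.

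Next I would bound each $N_i$. By \eqref{PBAe}, $\EE N_i\le\binom{n}{r}\cdot O^*(n^{-\alpha_r e(i)-\eps})=O^*(n^{r-\alpha_r e(i)-\eps})$, so Markov's inequality gives
\[
 \Pr\!\left(N_i>n^{i-\eps/2}/\tbinom{r}{i}\right)=O^*\!\left(n^{\,r-i-\alpha_r e(i)-\eps/2}\right).
\]
Thus everything comes down to the inequality $r-i-\alpha_r e(i)\le 0$ for $2\le i\le r-1$; granting it, each probability above is $O^*(n^{-\eps/2})=o(1)$, and summing over the finitely many $i$ completes the proof. To verify this inequality — which is exactly where the value of $\alpha_r$ is used — set $f(i)=\alpha_r e(i)-(r-i)$; as a function of a real variable its only nonlinear term is $-\alpha_r\binom{i}{2}$, so $f$ is concave, and it suffices to check the endpoints. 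The identity $\alpha_r(\binom{r}{2}-2)=r-2$ gives $f(2)=0$, and since $e(r-1)=r-2$ one finds $f(r-1)=\alpha_r(r-2)-1=\frac{(r-3)(r-4)}{2(\binom{r}{2}-2)}\ge 0$ for $r\ge 5$; concavity then gives $f\ge 0$ on $[2,r-1]$.

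The main obstacle is really the reduction in the first paragraph: recognising that a dangerous set is witnessed by an $r$-set carrying enough edges of $G_t$ for Observation~\ref{omeet} to bite, and that the resulting certificate $\cB_{B,e(i)}$ is free of $t$, so that the union over $T\approx n^{2-\alpha_r}$ time steps collapses to a union over the $O(1)$ values of $i$. Everything after that is a routine first moment plus Markov; the one delicate point is that the final inequality is an equality at $i=2$, which is precisely what pins down the constant $\alpha_r=\frac{r-2}{\binom{r}{2}-2}$.
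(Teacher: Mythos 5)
Your proof is correct and essentially identical to the paper's: both reduce the $t$-dependent count of dangerous $i$-sets to the $t$-free random variable counting $r$-sets $B$ with $\cB_{B,e_i}$ (the paper's $Z_i$, your $N_i$), apply \eqref{PBAe} and Markov, and verify $r-i\le\alpha_r(\binom{r}{2}-\binom{i}{2}-1)$ by checking $i=2$ and $i=r-1$ and invoking concavity.
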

\begin{proof}
For $2\le i\le r-1$ let $Y_{t,i}$ denote the number of dangerous $i$-sets at time $t$.
Let $Z_i$ denote the number of $r$-sets $A$ such that $\cB_{A,e_i}$ holds,
where $e_i=\binom{r}{2}-1-\binom{i}{2}$.
If $\cG_t$ holds and
an $i$-set $A'$ is dangerous at time $t$, then it is contained in an $r$-set $A$, not the vertex
set of any $H_s$, $s\le t$, such that $A$ spans at least $e_i$ edges
in $G_t$. But then as noted above $\cB_{A,e_i}$ holds. Since a particular $r$-set can be responsible
for at most $\binom{r}{i}\le 2^r$ $i$-sets being dangerous, it follows that, for any $t\le T$,
if $\cG_t$ holds then $Y_{t,i}\le 2^r Z_i$. Thus it suffices to show that
\begin{equation}\label{Zaim}
 \Pr(Z_i\ge 2^{-r}n^{i-\eps/2})=o(1)
\end{equation}
for each $i$.

We claim that, for $2\le i\le r-1$, we have
\begin{equation}\label{ri}
 r-i \le \alpha_r\left( \tbinom{r}{2}-1-\tbinom{i}{2} \right).
\end{equation}
This holds for $i=2$ by definition of $\alpha_r$. For $i=r-1$ it simplifies to $1\le \alpha_r(r-2)$
and hence to $(r-2)^2\ge \binom{r}{2}-2$, which is easily seen to hold for $r\ge 5$.
Hence, by convexity, \eqref{ri} holds for all $2\le i\le r-1$.
By \eqref{PBAe} we have
\[
 \EE[Z_i] = \sum_A \Pr(\cB_{A,e_i}) = O^*(n^{r-\alpha_r e_i-\eps}) = o( n^{r-\alpha_r e_i -\eps/2} ).
\]
But then, by \eqref{ri}, $\EE[Z_i]=o(n^{i-\eps/2})$, and \eqref{Zaim} follows by Markov's inequality.
\end{proof}

Lemma~\ref{B1} shows that one `global bad event' is unlikely. The next lemma considers another, much simpler
one.
\begin{lemma}\label{B2}
Suppose that $r\ge 5$ and $\eps>0$, and define $T=n^{2-\alpha_r-\eps}$ as usual.
Let $\cB_t^2$ be the event that the maximum degree of $G_t$ is at least $n^{1-\eps}$.
Then $\Pr(\bigcup_{t\le T} \cB_t^2) =o(1)$.
\end{lemma}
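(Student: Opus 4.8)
The plan is to bound the maximum degree directly, using the fact that $G_t = \bigcup_{s\le t} H_s$ is built from at most $T$ cliques each on $r$ vertices. A vertex $v$ of $G_t$ has $\deg_{G_t}(v)\le (r-1)\cdot d_t(v)$, where $d_t(v)$ is the number of cliques $H_s$, $s\le t$, containing $v$. So it suffices to show that with high probability every vertex lies in at most $n^{1-\eps}/(r-1)$ of the cliques at every time $t\le T$; since $d_t(v)$ is nondecreasing in $t$, it is enough to control $d_T(v)$. I would therefore fix a vertex $v$ and estimate the expected number of cliques containing it.

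First I would set up an annealed count. In the \emph{outline construction} each clique $H_{s+1}$ has vertex set $e_s\cup X_{s+1}$, where $X_{s+1}=X_{s+1,a_{s+1}}$ is one of the $m=\ceil{\log n}$ candidate sets $X_{s+1,a}$, each a uniformly random $(r-2)$-set. A vertex $v$ can enter $H_{s+1}$ only by lying in $X_{s+1}$ (the "carried" edge $e_s$ is itself a subset of the previous clique, so $v\in e_s$ means $v$ was already counted at step $s$, up to the $\pm r$ slack $v\in e_0$). The crucial point is that $X_{s+1}\in\{X_{s+1,a}:1\le a\le m\}$ \emph{regardless of which index $a_{s+1}$ is selected}, so
\[
  d_T(v) \le r + \sum_{s=1}^{T}\sum_{a=1}^{m}\mathbf 1[v\in X_{s,a}],
\]
and the indicators on the right are fully independent of the (complicated) selection rule. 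Each indicator has probability $(r-2)/n$, so the right-hand sum has mean $O^*(Tm/n)=O^*(T/n)$. Recalling $T=n^{2-\alpha_r-\eps}$ and $\alpha_r=\frac{r-2}{\binom r2-2}\ge \frac{r-2}{\binom r2}\ge 0$ — more simply, $\alpha_r>0$ — we get $T/n = n^{1-\alpha_r-\eps}$, which is comfortably smaller than $n^{1-\eps}$.

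Then I would upgrade the expectation bound to a high-probability statement. The cleanest route is a Chernoff bound on $\sum_{s,a}\mathbf 1[v\in X_{s,a}]$, a sum of independent Bernoulli$((r-2)/n)$ variables with mean $\mu=O^*(n^{1-\alpha_r-\eps})$; the probability it exceeds $n^{1-\eps}/(2(r-1))$, say, is at most $\exp(-\Omega^*(n^{1-\eps}))$ once $n$ is large, since $n^{1-\eps}$ dominates $\mu$ by a polynomial factor $n^{\alpha_r}$. A union bound over the $n$ vertices still gives $o(1)$. Since $\{\deg_{G_t}(v)\ge n^{1-\eps}\text{ for some }t\le T\}$ implies $\{d_T(v)\ge n^{1-\eps}/(r-1)\}$ which in turn (for $n$ large, absorbing the additive $r$) implies the Chernoff-bounded event, we conclude $\Pr(\bigcup_{t\le T}\cB_t^2)=o(1)$.

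The only mild subtlety — and the thing to be careful about rather than a genuine obstacle — is making precise that $v$'s membership in $G_t$ really is governed by the candidate sets $X_{s,a}$ and nothing else, so that the independence across $(s,a)$ (already exploited for the events $[A,t,C]$ earlier in this section) applies verbatim. Once that bookkeeping is in place, the polynomial gap between $T/n=n^{1-\alpha_r-\eps}$ and the target $n^{1-\eps}$ makes the Chernoff-plus-union-bound step routine.
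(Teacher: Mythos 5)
Your argument is correct and is essentially the same as the paper's: both bound $\deg_{G_t}(v)$ by the number of cliques $H_s$ containing $v$, reduce this to the number of candidate sets $X_{s,a}$ containing $v$ (using that the selected sets are always among the candidates, which sidesteps the dependence on the selection rule), observe this count is Binomial with mean $O^*(n^{1-\alpha_r-\eps}) \ll n^{1-\eps}$, and finish with a Chernoff tail bound and a union bound over the $n$ vertices. (Minor bookkeeping: a vertex of $H_{s+1}$ lies in $e_s\cup X_{s+1}\subseteq X_s\cup X_{s+1}$ for $s\ge 1$, so the correct crude bound is $d_T(v)\le 1+2\sum_{s,a}\mathbf 1[v\in X_{s,a}]$ rather than $r+\sum_{s,a}\mathbf 1[v\in X_{s,a}]$, but this constant factor is immaterial.)
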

\begin{proof}
Suppose that a vertex $v$ has degree at least $n^{1-\eps}$ in some $G_t$. Then
$v$ must be in (crudely) at least $n^{1-\eps}/r$ of the cliques $H_s$, $s\le t$, and hence
in at least $n^{1-\eps}/(2r)$ of the selected sets $X_s$. Thus, $v$ is in at least
$n^{1-\eps}/(2r)$ of the candidate sets $X_{s,a}$, $0\le s\le T$, $1\le a\le m$.
For each $v$, let $N_v$ be the number of these candidate sets that contain $v$.
Then $N_v$ has a binomial distribution with mean $m(T+1)(r-2)/n=O^*(n^{1-\alpha_r-\eps})=o(n^{1-\eps}/(2r))$.
Thus the probability that $N_v\ge n^{1-\eps}/(2r)$ is $\exp(-\Omega(n^{1-\eps}))=o(n^{-100})$, say,
and the probability that there exists such a $v$ is $o(1)$.
\end{proof}

After this preparation we are now ready to prove Theorem~\ref{th2}.
\begin{proof}[Proof of Theorem~\ref{th2}]
As above, we fix $r\ge 5$ and $\eps>0$, and define $m=\ceil{\log n}$ and $T=n^{2-\alpha_r-\eps}$,
where $\alpha_r=(r-2)/(\binom{r}{2}-2)$. We consider the outline construction described above, 
saying that a candidate choice $X_{t+1,a}$ for $X_{t+1}$ is \emph{successful} if it satisfies
conditions \eqref{C0}--\eqref{C2} above, and \eqref{C3} and \eqref{C4} below.
At each step $t+1\le T$ we choose a successful candidate for $X_{t+1}$ if there is one, otherwise,
our construction \emph{fails at step $t+1$}.

Let $\sF_t$ be the (finite, of course) $\sigma$-algebra generated by $(X_{s,a})_{s\le t,\,a\le m}$,
i.e., by all information `revealed' by time $t$. Our key claim is that for $0\le t\le T-1$ we have
\begin{equation}\label{mainaim}
 \Pr(\hbox{ candidate $X_{t+1,1}$ succeeds }\mid \sF_t) \ge 99/100
\end{equation}
whenever all previous steps have succeeded, and neither $\cB_t^1$ nor $\cB_t^2$ holds. Suppose for the
moment that this holds; we shall see that Theorem~\ref{th2} easily follows.
Indeed, because the $X_{t+1,a}$ are independent of each other and of $\sF_t$, and identically
distributed, if \eqref{mainaim} holds, then recalling that $\cF_{t+1}$ denotes the event that
our construction fails at step $t+1$, we have
\[
 \Pr( \cF_{t+1} \mid \sF_t) = \prod_{a=1}^m  \Pr(\hbox{ candidate $X_{t+1,a}$ fails }\mid \sF_t)
 \le 100^{-m} \le n^{-2}
\]
whenever $(\cB_t^1\cup \cB_t^2)^\cc\cap \cG_t = (\cB_t^1\cup \cB_t^2\cup \cF_1\cup\cdots\cup \cF_t)^\cc$ holds.
Since this latter event is $\sF_t$-measurable, it follows that
\[
   \Pr\bigl( \cF_{t+1} \cap (\cB_t^1\cup \cB_t^2\cup \cF_1\cup\cdots\cup \cF_t)^\cc \bigr) \le 
  \Pr\bigl( \cF_{t+1} \mid (\cB_t^1\cup \cB_t^2\cup \cF_1\cup\cdots\cup \cF_t)^\cc \bigr) \le n^{-2}.
\]
But then, considering the least $t$ for which $\cB^1_t \cup \cB^2_t \cup \cF_t$ holds, we see that
\[
 \Pr \bigl ( \bigcup_{t\le T} \cF_t \bigr) \le Tn^{-2}+\Pr \bigl (\bigcup_{t\le T} \cB_t^1 \bigr)  +\Pr \bigl (\bigcup_{t\le T} \cB_t^2 \bigr) 
 =o(1),
\]
using Lemmas~\ref{B1} and~\ref{B2}. Hence, with probability $1-o(1)>0$ (for $n$ large enough),
the construction succeeds for $T$ steps. Then, as noted above, the fact that conditions
\eqref{C0}--\eqref{C2} are satisfied implies that we construct a good $K_r$-chain of the required length.
It remains only to establish \eqref{mainaim}.

From now on, we condition on $\sF_t$ (i.e., on all $X_{s,a}$, $s\le t$), and assume that our construction
succeeded at steps $1,\ldots,t$ and that neither $\cB_t^1$ nor $\cB_t^2$ holds. The only relevant
randomness remaining is the uniform choice of $X_{t+1,1}$ from all sets of $r-2$ vertices.
Let $\cE_i$ be the event that $X_{t+1,1}$ fails to satisfy condition (C$i$).
To establish \eqref{mainaim} it suffices to show that $\Pr(\cE_i)=o(1)$, $i=0,\ldots,4$.
Clearly, $\Pr(\cE_0)=1-\binom{n-2}{r-2}/\binom{n}{r-2}=o(1)$.

The case $i=1$ is also easy. If $\cE_1$ holds then either (i) $X_{t+1,1}$ includes two vertices
forming an edge $uv$ of $G_t$ or (ii) for some vertex $u$ of $e_t$, $X_{t+1,1}$ includes
a vertex $v$ such that $uv\in E(G_t)$.  The first
event has probability at most $\binom{r-2}{2} e(G_t)/\binom{n}{2} =O(e(G_t)n^{-2})= O(T n^{-2})=o(1)$.
Since $\cB_t^2$ does not hold and there are only two choices for $u$, the second
has probability $O(\Delta(G_t)/n)=o(1)$.

Before turning to the details, let us outline the argument for $\cE_2$. If $\cE_2$ holds,
then for some $0\le j\le 2$ and $1\le i\le r-2$ with $i+j<r$, some subset of $V(e_t)$ of size
$j$ and some subset of $X_{t+1,1}$ of size $i$ combine to form a dangerous set of size $i+j$.
The case $j=0$ will cause no problems, since we assume $\cB_t^1$ does not hold,
so there are $o(n^{i})$ dangerous
sets of size $i$. (This is analogous to (i) above.) For $j>0$, we will have a problem
only if some subset $J$ of $V(e_t)$ of size $j$ is in $\Theta(n^i)$ dangerous $(i+j)$-sets.
To avoid this, we consider a further condition
\[
 \parbox{13cm}{For each $j=1,2$ and $i\ge 1$ with $i+j<r$,
 no set of $j$ vertices of $X_{t+1,a}$ is contained in more
 than $n^{i-\eps/4}$ dangerous $(i+j)$-sets at time $t$.} \tag{C3}\label{C3}
\]
This is not quite what we need, however; we would like to use this condition one step earlier
to say that $e_t\subset X_t$ has the properties we want. The problem is that condition \eqref{C3}
having been satisfied by $X_t$ only tells us
that subsets of $e_t$ are not in many sets that were dangerous \emph{at time $t-1$},
rather than at time $t$. On the other hand, the bound $n^{i-\eps/4}$ in condition \eqref{C3}
is much stronger than the bound $o(n^i)$ that we need. So we will be able to deal with this
`off-by-one' problem 
by showing that not too many sets become dangerous in one step. To make this argument work
we will need one final condition. We say that a set $A$ of vertices
with $2\le |A|\le r-1$ is \emph{deadly} at time $t$
if there is a set $B$ of $r$ vertices with $A\subset B$ such that $G_t\cup K_A$ contains
all edges within $B$, with $B$ not equal to $V(H_s)$ for any $s\le t$.
Note that a deadly set is dangerous.
\[
 \parbox{13cm}{For $j\in\{2,3\}$, no set of $j$ vertices of $e_t\cup X_{t+1,a}$ containing at most
one vertex of $e_t$ and at least one vertex of $X_{t+1,a}$ is contained in more than $n^{1-\eps/8}$ deadly
$(j+1)$-sets at time $t$.} \tag{C4}\label{C4}
\]

As noted above, it remains only to show that $\Pr(\cE_i)=o(1)$ for $i=2,3,4$. Before doing this, we
establish some deterministic consequences of our conditions \eqref{C0}--\eqref{C4}, i.e., of succeeding
with our choice at step $t+1$. In the following we assume that $n$ is large enough, depending
on the constant~$c$.

\begin{claim}\label{claim1}
Let $c>0$ be any positive constant.
If the candidate $X_{t+1,1}$ satisfies conditions \eqref{C0}--\eqref{C4},
then for $j=1,2$ and each $i\ge 1$ with
$i+j<r$, every set $J$ of $j$ vertices from $X_{t+1,1}$ is in at most $c n^i$ $(i+j)$-sets
that are dangerous {\bf at time} $\mathbf{t+1}$.
\end{claim}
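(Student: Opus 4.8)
The plan is to bound the number of $(i+j)$-sets that are dangerous at time $t+1$ but not at time $t$, and then add this to the count of sets that were already dangerous at time $t$, controlled by condition \eqref{C3}. First I would fix a set $J$ of $j\in\{1,2\}$ vertices of $X_{t+1,1}$, and split the $(i+j)$-sets $D\supset J$ that are dangerous at time $t+1$ into two groups: those already dangerous at time $t$, and those that became dangerous only when $H_{t+1}$ was added. For the first group, note that $J\subset X_{t+1,1}$ and condition \eqref{C3} (applied to $X_{t+1,1}$ at step $t+1$) directly gives at most $n^{i-\eps/4}\le \tfrac{c}{2}n^i$ such sets, for $n$ large.

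The main work is the second group: $(i+j)$-sets $D$ with $J\subset D$, $|D|=i+j$, that are dangerous at time $t+1$ but not at time $t$. Such a $D$ is contained in some $r$-set $B$, with $B\ne V(H_s)$ for all $s\le t+1$, such that $G_{t+1}\cup K_D$ misses at most one edge inside $B$, but $G_t\cup K_D$ misses at least two. Hence at least one edge of $B$ lies in $G_{t+1}\setminus G_t$, i.e.\ in $H_{t+1}$ but not in $G_t$; by condition \eqref{C1} this edge is a new edge of the chain, so it lies inside $V(H_{t+1})=e_t\cup X_{t+1,1}$. Therefore $B$ meets $V(H_{t+1})$ in at least two vertices. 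I would now bound the number of such $B$ (and hence $D$) by fixing how $B$ meets $V(H_{t+1})$. Writing $A'=B\cap V(H_{t+1})$, we have $|A'|\ge 2$ and $|A'|\le r$; the case $|A'|=r$ is impossible since then $B=V(H_{t+1})$. So $|A'|=:p$ with $2\le p\le r-1$. Splitting $A'$ according to how many of its vertices lie in $e_t$ versus in $X_{t+1,1}$, and using that at least one new edge of $B$ lies inside $A'$ so that $A'$ contains at most one vertex of $e_t$ (since $e_t$ is an old edge), we get $A'$ has at most one vertex in $e_t$ and at least one vertex — in fact at least $p-1\ge 1$ — in $X_{t+1,1}$. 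Crucially, after adding the edges of $H_{t+1}$, the set $B$ is such that $G_t\cup K_{A'}$ contains all but at most one edge of $B$ once we also fill in $K_D$; absorbing $D\setminus J$ and $A'$ together, one checks that $A'$ together with at most one edge's worth of slack is \emph{deadly} or nearly so at time $t$ — this is exactly what condition \eqref{C4} was designed to control (it bounds, for $|A'|\in\{2,3,4\}$ appropriately, the number of deadly $(|A'|+1)$-sets containing a given such small set by $n^{1-\eps/8}$), and for larger $A'$ the count of extension $r$-sets $B$ is trivially $O(n^{r-p})$ while the number of choices of $A'$ inside the bounded set $V(H_{t+1})$ is $O(1)$.

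Putting this together: the number of $r$-sets $B$ arising this way is $O(1)\cdot\big(\text{number of choices of }A'\big)\cdot O(n^{r-p})$, and translating back through $D\subset B$ with $|D|=i+j$ fixed containing $J$, one finds the exponent of $n$ is at most $i-\eps/8$ (the savings coming from \eqref{C4} when $p$ is small, and from $p\ge 2$ forcing $r-p\le r-2$ when $p$ is large, combined with the constraint $i+j<r$). Hence this second group contributes at most $n^{i-\eps/8}\le\tfrac{c}{2}n^i$ for $n$ large, and adding the two bounds gives at most $cn^i$ in total, as required. The step I expect to be the main obstacle is the careful bookkeeping in the second group: correctly identifying, for each possible intersection pattern $A'=B\cap V(H_{t+1})$, which of conditions \eqref{C3} and \eqref{C4} (and with which parameters) applies, and verifying in every case that the resulting exponent of $n$ comes out strictly below $i$ — in particular handling the boundary cases $p=2$ and $p$ near $r$, and making sure the "off-by-one" between "dangerous at time $t$" and "deadly at time $t$" is bridged exactly by the at-most-one-missing-edge slack built into the definitions.
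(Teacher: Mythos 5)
Your overall plan---split the dangerous-at-time-$(t{+}1)$ sets into those already dangerous at time $t$ (controlled by \eqref{C3}) and those that became dangerous only when $H_{t+1}$ was added, then hit the latter with \eqref{C4}---matches the structure of the paper's argument. But your treatment of the second group has a genuine gap: you never identify the key reduction that makes the count work, and the counting you gesture at does not close on its own.

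The crucial step in the paper's proof is to \emph{prune the newly dangerous sets}. Writing each such $(i{+}j)$-set as $D=I\cup J$ with $|I|=i$, the authors use $\Delta(G_{t+1})=o(n)$ (from $\cB_t^2$ failing) together with $|V(H_{t+1})|=r=O(1)$ to discard the $o(n^i)$ choices of $I$ that touch $H_{t+1}$ or send an edge to it; what remains is still $\Theta(n^i)$ sets if the claim fails. For a surviving $I$, the witness $r$-set $A$ has $L=V(H_{t+1})\cap(A\setminus(I\cup J))\ne\emptyset$, and \emph{all} $|I||L|$ edges from $I$ to $L$ are missing from $G_{t+1}\cup K_{I\cup J}$ because $I$ is non-adjacent to $H_{t+1}$. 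Since at most one edge can be missing, this forces $|I|=|L|=1$. That is the step that turns an arbitrary $i$ into $i=1$, makes $J\cup L$ a set of $j{+}1\in\{2,3\}$ vertices of $H_{t+1}$ with at most one vertex in $e_t$ (because $J\subset X_{t+1,1}$ misses $e_t$ entirely and $|L|=1$), and makes the pigeonhole land exactly on the event that \eqref{C4} forbids.

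Your proposal instead fixes $A'=B\cap V(H_{t+1})$ and tries to bound $r$-sets $B$ by $O(n^{r-p})$ and ``translate back''; this is just the trivial count of $r$-sets with a prescribed intersection with a fixed $O(1)$-set and uses none of the structure of $G_{t+1}$, so the exponent does not come out as $i-\eps/8$ in general. You also assert that ``at least one new edge of $B$ lies inside $A'$, so $A'$ contains at most one vertex of $e_t$'', but this implication is false: $A'=\{u,v,w\}$ with $e_t=\{u,v\}$ and $w\in X_{t+1,1}$ contains both vertices of $e_t$ yet contains the new edges $uw$ and $vw$. In the paper's proof the ``at most one vertex in $e_t$'' fact follows from $|L|=1$, which you do not have. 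Finally, you are right to suspect that the ``off-by-one'' between dangerous-at-$t$ and deadly-at-$t$ must be bridged by the one-edge slack; in the paper this is accomplished precisely because $|I|=|L|=1$ forces the single missing edge to be the $I$--$L$ edge, so adding $L$ makes $A$ complete in $G_t\cup K_{I\cup J\cup L}$, i.e., $I\cup J\cup L$ is deadly at time $t$. Without the non-adjacency pruning, none of these conclusions follow, and the argument does not go through as written.
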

\begin{proof}
Suppose the claim fails for some $i$, $j$ and $J$. Because condition \eqref{C3}
is satisfied, we see that there are $\Theta(n^i)$ $i$-sets $I$ such that $I\cup J$ is dangerous
at time $t+1$ but not at time $t$. Since $H_{t+1}=K_{e_t \cup X_{t+1,1}}$
contains $r=O(1)$ vertices and $\Delta(G_{t+1})\le \Delta(G_t)+r-1=o(n)$ by our assumption
that $\cB_t^2$ does not hold,
there are $\Theta(n^i)$ of these sets $I$ with the additional property that no vertex in $I$
is in $H_{t+1}$, or is adjacent in $G_{t+1}$ to any vertex in $H_{t+1}$.
For each such set $I$, by the definition of dangerous there is an $r$-set $A\supset I\cup J$
such that $G_{t+1}\cup K_{I\cup J}$ contains all but at most one of the edges within $A$.
Furthermore, since $I\cup J$ is not dangerous at time $t$, there is such a set $A$ such that
$G_{t+1}\setminus G_t$ includes at least one edge within $A$ but not within $I\cup J$.
In particular, $A\setminus (I\cup J)$ contains at least one vertex of $H_{t+1}$.
Let $L=V(H_{t+1})\cap (A\setminus (I\cup J))$, so $L\ne\emptyset$.

By choice of $I$, there are no edges from $I$ to $H_{t+1}$ in $G_{t+1}$. Hence
there are no edges from $I$ to $L$ in $G_{t+1}$. Since $I$, $J$ and $L$ are disjoint,
in $G_{t+1}\cup K_{I\cup J}$ there are still no edges from $I$ to $L$. Hence, in this final
graph, there are at least $|I||L|$ missing edges inside $A$; as at most one edge is missing,
we conclude that $|I|=|L|=1$. Furthermore, the only missing edge in $A$ in $G_{t+1}\cup K_{I\cup J}$
is the $I$--$L$ edge. Hence in $G_{t+1}\cup K_{I\cup J\cup L}$ the set $A$ is complete.
Since $V(H_{t+1})\cap A=J\cup L$, we see that $A$ is also complete in $G_t\cup K_{I\cup J\cup L}$,
so $I\cup J\cup L$ is deadly \emph{at time $t$}. Note that $|I\cup J\cup L|=j+2<r$.

Since we find one such deadly set $I\cup J\cup L$ for each of $\Theta(n)$ choices for $I$ (recall
that now $i=1$),
and there are at most $r$ choices for $L$, we see that some set $J\cup L$ of $j+1=2$ or $3$
vertices of $H_{t+1}$ (with at most one in $e_t$, namely that in $L$) is contained
in $\Theta(n)$ deadly $(j+2)$-sets $I\cup J\cup L$. But this violates our assumption that 
condition \eqref{C4} holds, completing the proof of the claim.
\end{proof}

\begin{claim}\label{claim2}
If the candidate $X_{t+1,1}$ satisfies conditions \eqref{C0}--\eqref{C4}, then for $i\in\{2,3\}$
each vertex of $X_{t+1,1}$ is contained in 
at most $2n^{i-\eps/4}$ deadly $(i+1)$-sets {\bf at time} $\mathbf{t+1}$.
\end{claim}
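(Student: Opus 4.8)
The plan is to split the deadly $(i+1)$-sets $A'$ at time $t+1$ containing $v$ according to whether $A'$ was \emph{already} deadly at time $t$ or only became deadly at step $t+1$, and to bound the two contributions by $n^{i-\eps/4}$ and $o(n^{i-\eps/4})$ respectively. For sets of the first type this is immediate: every deadly set is dangerous, and $i+1<r$ since $r\ge 5$, so condition~\eqref{C3} applied with $j=1$ to the vertex $v\in X_{t+1,1}$ shows that $v$ lies in at most $n^{i-\eps/4}$ dangerous, hence at most $n^{i-\eps/4}$ deadly, $(i+1)$-sets at time $t$.

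For sets of the second type I would argue along the lines of the proof of Claim~\ref{claim1}, but more cheaply, using only the maximum-degree bound; recall we are assuming $\cB_t^2$ does not hold, so $\Delta(G_t)<n^{1-\eps}$. Write $W=V(H_{t+1})$, so $|W|=r$ and every edge of $G_{t+1}\setminus G_t$ lies inside $W$. Suppose $A'$ is deadly at time $t+1$ but not at time $t$, and let $B$ be a witnessing $r$-set, so $A'\subsetneq B$, $B\ne V(H_s)$ for all $s\le t+1$, and $G_{t+1}\cup K_{A'}$ is complete on $B$. If $A'\subseteq W$ there are only $O(1)=o(n^{i-\eps/4})$ such sets, so assume $A'\not\subseteq W$ and put $s=|A'\setminus W|$; since $v\in A'\cap W$ we have $1\le s\le i$. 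As $A'$ is not deadly at time $t$, the graph $G_t\cup K_{A'}$ misses some edge inside $B$; this missing edge must lie in $G_{t+1}\setminus G_t$ but not inside $A'$, so it lies inside $W$ and has an endpoint in $L:=(B\cap W)\setminus A'$, which is therefore nonempty. Now each $u\in A'\setminus W$ is joined, in the complete graph induced by $G_{t+1}\cup K_{A'}$ on $B$, to every vertex of $L$; since $L\cap A'=\emptyset$ such an edge lies in $G_{t+1}$, and since $u\notin W$ it is not one of the new edges of $G_{t+1}\setminus G_t$, so it lies in $G_t$. Hence $A'\setminus W\subseteq\bigcup_{w\in W}N_{G_t}(w)$, a set of size at most $r\Delta(G_t)=O(n^{1-\eps})$, so there are $O(n^{s(1-\eps)})$ choices for $A'\setminus W$ and $O(1)$ for $A'\cap W$; summing over $1\le s\le i$ gives $O(n^{i(1-\eps)})=o(n^{i-\eps/4})$ such sets, using $i\ge 2$. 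Adding the two contributions gives at most $2n^{i-\eps/4}$ once $n$ is large.

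The part that needs care is the structural step in the second case: locating the nonempty set $L$ and checking that each vertex of $A'$ lying outside $H_{t+1}$ is forced to be a $G_t$-neighbour of $V(H_{t+1})$. This is where the hypothesis that $A'$ was not yet deadly at time $t$ is used, and it is a simplified analogue of the ``$|I|=|L|=1$'' argument in the proof of Claim~\ref{claim1}. Note that, unlike in Claim~\ref{claim1}, condition~\eqref{C4} and the bound on the number of dangerous sets coming from $\cB_t^1$ are not needed here: deadliness is restrictive enough that the newly-deadly sets can be counted directly from the maximum-degree bound.
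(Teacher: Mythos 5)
Your proof is correct and follows essentially the same route as the paper's: you split the deadly $(i+1)$-sets containing $v$ into those already deadly at time $t$, bounded by $n^{i-\eps/4}$ via condition~\eqref{C3} (and deadly $\Rightarrow$ dangerous), and those newly deadly, where the key observation is that a witnessing $r$-set $B$ must produce a nonempty set $L=(B\cap V(H_{t+1}))\setminus A'$ forcing every vertex of $A'\setminus V(H_{t+1})$ to be a $G_t$-neighbour of $V(H_{t+1})$, so the max-degree bound from $\cB_t^2$ kills the count. The only difference is presentational: the paper runs the second half as a proof by contradiction (choosing $\Theta(n^{i-\eps/4})$ sets $I$ disjoint from and sending no edges to $H_{t+1}$, then showing the $I$--$L$ edges cannot all be present), while you count the newly-deadly sets directly by enumerating $A'\setminus V(H_{t+1})$ inside a set of size $O(n^{1-\eps})$.
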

\begin{proof}
The proof is very similar to that of Claim~\ref{claim1}. Adopting similar notation,
if the claim fails there is some $i\in \{2,3\}$ and some singleton set $J=\{u\}\subset X_{t+1,1}$
such that there are $2n^{i-\eps/4}$ $i$-sets $I$ disjoint from $J$ with $I\cup J$
deadly at time $t+1$. Since condition \eqref{C3} holds
and a deadly set is dangerous, at least $n^{i-\eps/4}$ of
these sets $I$ are such that $I\cup J$ is deadly at time $t+1$ but not at time $t$. As before,
since $\Delta(G_{t+1})=O(n^{1-\eps})=o(n^{1-\eps/4})$,
we may find $\Theta(n^{i-\eps/4})>0$ such sets $I$ which are disjoint from $H_{t+1}$ and send no
edges to $H_{t+1}$ in the graph $G_{t+1}$.
Fix any such $I$. Then there is an $r$-set $A\supset I\cup J$ such that $G_{t+1}\cup K_{I\cup J}$
contains all edges in $A$ with (as before) $L=V(H_{t+1})\cap (A\setminus (I\cup J))$ non-empty. (Otherwise
$I\cup J$ was deadly at time $t$ also.) Since $L\subset V(H_{t+1})$, by choice of $I$ there are no
$I$--$L$ edges in $G_{t+1}$, so the $I$--$L$ edges (of which there is at least one) are missing
in $G_{t+1}\cup K_{I\cup J}$ also, a contradiction.
\end{proof}

We now show that for $k=2,3,4$ the probability of the event $\cE_k$ that our first candidate
$X_{t+1,1}$ fails to satisfy condition (C$k$) is $o(1)$. Recall that we are assuming that 
our algorithm succeeded at earlier steps and that neither $\cB_t^1$ nor $\cB_t^2$ holds.
If condition \eqref{C2} fails then for some $0\le j\le 2$ and $1\le i\le r-2$ with $i+j<r$
there is an $i$-element subset $I$ of $X_{t+1,1}$ and a $j$-element subset $J$ of $e_t$
such that $I\cup J$ is dangerous (at time $t$).
For $j=0$, since $\cB_t^1$ does not hold, there are $o(n^i)$ dangerous $i$-sets in $G_t$.
The probability that $X_{t+1,1}$ includes any given one is $O(n^{-i})$, so the probability
that such $I$, $J$ exist with $j=0$ is $o(1)$.
For $j\in\{1,2\}$,  applying Claim~\ref{claim1}
with $t$ replaced by $t-1$ (for $t>0$; for $t=0$ there is no problem),
we see that for each $j$-element subset $J$ of $e_t\subset X_t$ there are $o(n^{i})$
$i$-sets $I$ such that $I\cup J$ is dangerous. It follows that $\Pr(\cE_2)=o(1)$.

Turning to $\cE_3$, fix $j\in \{1,2\}$ and $i\ge 1$ with $i+j<r$.
Call a $j$-set \emph{bad} if it is in at least $n^{i-\eps/4}$ $(i+j)$-sets that are dangerous at time $t$.
Since at most $n^{i+j-\eps/2}$ $(i+j)$-sets are dangerous at time $t$, there are at most
$\binom{i+j}{j}n^{j-\eps/4} = O(n^{j-\eps/4})$
bad $j$-sets. For condition \eqref{C3} to fail, $X_{t+1,1}$ must contain such a bad $j$-set,
an event of probability $O(n^{j-\eps/4}n^{-j})=o(1)$.

Finally, we turn to $\cE_4$; the argument is somewhat similar to that for $\cE_2$.
Firstly, as $\cB_t^1$ does not hold and a deadly set is dangerous, there are at most $(j+1)n^{j-\eps/4}$
$j$-sets $J$ with the property that $J$ is contained in more than $n^{1-\eps/4}$ deadly $(j+1)$-sets
at time $t$. Hence the probability that $X_{t+1,1}$ contains such a $j$-set is $O(n^{j-\eps/4}/n^j)=o(1)$.
For $j=2,3$ it remains to bound the probability that for some vertex $v$ of $e_t$, the
set $X_{t+1,1}$ contains a $(j-1)$-set $J$ such that $J\cup\{v\}$ is in more than $n^{1-\eps/8}$
deadly $(j+1)$-sets. This probability is $o(1)$ as required unless $v$ is in $\Theta(n^{j-1})$
$j$-sets each contained in more than $n^{1-\eps/8}$ deadly $(j+1)$-sets. But if this
holds, $v$ is in $\Theta(n^{j-\eps/8})$ deadly $(j+1)$-sets. Since $v\in e_t\subset X_t$,
this contradicts Claim~\ref{claim2} applied at time $t-1$.
\end{proof}

\section{Deterministic constructions}
\label{sec:deterministic}

In this section we present a deterministic algorithm that constructs an initially infected set which stabilizes after at least $\frac{1}{200}n^{3/2}$ time steps in the $K_5$-bootstrap process. This construction generalizes immediately to any $r \geq 6$ to obtain graphs that stabilize after at least $c_r n^{3/2}$ time steps in the $K_r$-bootstrap process. The result is not as strong as Theorem~\ref{thm:maxTime5}, but the proof, in addition to being deterministic, is much simpler, and may perhaps be of independent interest.

\begin{lemma}
\label{lem:deterministicSystem}
For all $t \leq T = \frac{1}{200}n^{3/2}$ we can find a collection $\AA = \{ A_1, \ldots , A_t \} \subseteq [n]^{(5)}$ and distinct $e_0 = \{1,2\}, e_1, \ldots , e_t \in [n]^{(2)}$ with the following properties:
\begin{enumerate}
\item $e_i = A_i \cap A_{i+1} $ for $i = 1, \ldots ,t-1$,
\item $|A_i \cap A_j| \leq 1 $ if $|i - j| \geq 2$,
\item there are no distinct $u,v,w \in [n]$ and $1 \leq i < j < k \leq t$, such that $\{u,v\} \subset A_i$, $\{u,w\} \subset A_j$ and $\{v,w\} \subset A_k$ (i.e., there are no ``external triangles'' in the set system),
\item for all $u \in [n]$ we have $\deg(u) = |\{A_i \in \AA: u \in A_i\}| \leq \frac{1}{20}\sqrt{n}$.
\end{enumerate}
\end{lemma}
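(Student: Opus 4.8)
The plan is to build the set system $\AA$ greedily, adding one $K_5$ (i.e., one $5$-set $A_i$) at a time, and to show via a counting argument that at every step $t\le T$ there are enough valid choices for the next set, so we never get stuck. Each new set $A_{t+1}$ must contain the previously chosen edge $e_t\subset A_t$ (to satisfy property~(1)) and $3$ new vertices $v_1,v_2,v_3$ chosen from $[n]$; among the $\binom{5}{2}=10$ edges of $A_{t+1}$, one of the $\binom{3}{2}=3$ edges not meeting $e_t\cup\{$the other endpoint pattern$\}$\,--- more precisely, we designate $e_{t+1}$ to be one specific edge among the three spanned by $\{v_1,v_2,v_3\}$, say $\{v_1,v_2\}$, so that $e_{t+1}$ is automatically disjoint from all earlier edges and does not lie in any earlier $A_j$. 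The constraints to respect when choosing $(v_1,v_2,v_3)$ are: (2) no $A_j$ with $j\le t-1$ shares $2$ vertices with $A_{t+1}$; (3) no external triangle is created; (4) no vertex degree exceeds $\frac1{20}\sqrt n$.

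First I would fix, for the inductive step, the set $A_t$ and edge $e_t$, and count the number of ``forbidden'' triples $(v_1,v_2,v_3)$ of new vertices. The degree bound (4) is the easy one: since $\sum_u \deg(u) = 5t \le 5T = \tfrac{1}{40}n^{3/2}$, the number of vertices of degree $\ge \tfrac1{20}\sqrt n$ is at most $\tfrac{(1/40)n^{3/2}}{(1/20)\sqrt n} = \tfrac12 n$, so at least $n/2$ vertices are still ``usable'', and choosing all of $v_1,v_2,v_3$ from these loses only a constant factor. Next, for (2): a pair of new vertices lands in a common earlier $A_j$ only if that pair is one of the $\binom{5}{2}=10$ pairs inside some $A_j$; summing over $j\le t$ gives at most $10t \le 10T$ ``bad pairs'', and the number of triples from $[n]$ containing a fixed bad pair is $n-2$, so the number of triples ruled out by (2) is at most $10T(n-2) \le 10T n$. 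For (3), an external triangle through the new set is created precisely when two of the three new vertices, together with the edges of $A_{t+1}$, complete a triangle $\{u,v,w\}$ using two older edges and one edge of $A_{t+1}$, or when one old vertex and the new edges conspire; carefully, the dangerous configurations are: a vertex $w\in[n]$ and two earlier edges $\{v_i,w\},\{v_j,w\}\in\bigcup_{k\le t}A_k^{(2)}$ realized in earlier sets, which together with the edge $\{v_i,v_j\}\subset A_{t+1}$ forms an external triangle. I would bound the number of such $(v_1,v_2,v_3)$ by: for each vertex $w$ of degree $d_w$, the pairs of earlier edges at $w$ number at most $\binom{5 d_w}{2}$ (crudely), and one then counts triples containing such a ``cherry''; the total is $O\!\left(\sum_w d_w^2 \cdot n\right)$, and since $\sum_w d_w^2 \le (\max_w d_w)\sum_w d_w \le \tfrac1{20}\sqrt n \cdot 5T = O(n^2)$ (before imposing the degree cap one would argue it slightly differently), this is $O(n^3)$ with a small constant. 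One also has to rule out triples creating a triangle among two new vertices and an old vertex via edges all inside $A_{t+1}$ — but those are internal to $A_{t+1}$ and hence fine; the genuinely external ones are as above, plus the symmetric case where the ``apex'' of the external triangle is itself a new vertex, which is impossible since new vertices have no earlier edges.

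Putting the pieces together, the number of triples $\{v_1,v_2,v_3\}$ that are forbidden is at most $C T n$ for an absolute constant $C$, whereas the number of triples available from the $\ge n/2$ usable vertices is $\binom{n/2}{3} = \Theta(n^3)$. Since $T = \tfrac1{200}n^{3/2} = o(n^2)$, we have $CTn = o(n^3)$, so for $n$ large the forbidden triples are a vanishing fraction and a valid choice of $A_{t+1}$ (and of $e_{t+1}$) exists; I would track the constants carefully enough to confirm that $\tfrac1{200}$ and $\tfrac1{20}$ in the statement are compatible, but the constant-chasing is routine. The induction then runs for all $t\le T$, establishing the lemma.

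The main obstacle I anticipate is the bookkeeping for property~(3): one must enumerate \emph{all} the ways a fresh $K_5$ placed on $e_t\cup\{v_1,v_2,v_3\}$ can close an external triangle — the apex can be an old vertex with two old edges to the new vertices, or one of the two ``reused'' endpoints of $e_t$ together with one old edge and one new edge, etc. — and show each type is ruled out for only $o(n^3)$ triples. Getting a clean bound on the number of ``cherries'' (paths of length two) in the current graph $G_t=\bigcup A_i$ using the degree bound (4) is the crux; once $\sum_w \binom{\deg_{G_t}(w)}{2}=O(T\cdot \max\deg)=O(n^2)$ is in hand, multiplying by the $O(n)$ ways to extend a cherry to a triple gives the needed $o(n^3)$, and the argument closes.
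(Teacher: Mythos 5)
Your high-level plan — a greedy construction, a degree cap to control the "cherry'' count, and a union bound over forbidden configurations — is essentially the same strategy the paper uses. However, there are two concrete gaps in the execution, and one of them concerns the final closing step.

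First, the conclusion ``the number of forbidden triples is at most $CTn=o(n^3)$'' contradicts the bound you derive two sentences earlier. As you compute, the cherry contribution is $O\bigl(\sum_w d_w^2\cdot n\bigr)$ with $\sum_w d_w^2 \le (\max_w d_w)(\sum_w d_w) = \Theta(n^2)$, so that term is $\Theta(n^3)$, not $O(Tn)$. Since $\binom{n/2}{3}\approx n^3/48$ is also $\Theta(n^3)$, the argument cannot close by saying the forbidden fraction vanishes: the two constants must be compared, and the numbers $\tfrac1{20}$, $\tfrac1{200}$ are chosen in the paper precisely so that this comparison works for the paper's slightly different counting scheme, not yours. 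In fact the margin is uncomfortably small, and your sketch does not track it.

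Second, the case analysis for property (3) has a hole. You dismiss configurations where the apex of a triangle is a new vertex on the grounds that ``new vertices have no earlier edges.'' That is false: property (2) allows $|A_{t+1}\cap A_j|=1$ for $j\le t-1$, so a new vertex may well lie in an earlier $A_j$ and carry earlier edges. More importantly, the case you flag but never bound — a triangle on $z\in e_t$, a new vertex $y$, and an old apex $x\notin A_{t+1}$ with $\{x,z\}$ and $\{x,y\}$ both earlier edges — is a genuine threat: the number of bad $y$'s for each $z\in e_t$ is of order $|N^2(z)|=\Theta(n)$, so it contributes another $\Theta(n^3)$ worth of forbidden triples. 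Whether the total still stays below $\binom{n/2}{3}$ is exactly the constant-chasing you defer, and as sketched the inequality is not established.

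The paper sidesteps all of this by choosing the new vertices $w_1,w_2,w_3$ \emph{one at a time}, each required to lie outside $N^2(u)\cup N^2(v)\cup N^2(w_1)\cup\cdots\cup N^2(w_{i-1})$, a set of size $<100(\tfrac1{20}\sqrt n+1)^2<n/2$ under the degree cap. This single condition simultaneously kills properties (2) and (3) (both the two-new-leaves cherries and the $e_t$-leaf cherries), and picking the smallest-degree admissible vertex handles (4). Adopting that one-at-a-time, second-neighbourhood framing would repair both gaps in your sketch and make the constant bookkeeping painless.
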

\begin{proof}
Let us define the neighbourhood $N(u)$ of $u \in [n]$ to be
\[
 N(u) = \{ v \in [n] : \mbox{there exists some } A_i \in \AA \text{ such that } u,v \in A \}
\]
Also define the second neighbourhood of $u$ to be
\[
 N^2(u) = \bigcup_{v \in N(u)} N(v).
\]
Notice that $u \in N(u) \subseteq N^2(u)$. Also, if for all $u \in [n]$ we have $\deg(u) \leq \Delta$, then $|N^2(u)| < 25 \Delta^2$.
 
Let us proceed by induction on $t$. Assume that we have sets $A_1, \ldots ,A_{t-1}$, and pairs $e_0 = \{1,2\}, e_1, \ldots , e_{t-1}$, satisfying the lemma. We want to find some $A_{t}$ and $e_t$ while $t < cn^{3/2}$. Let $e_{t-1} = \{u,v \}$. We shall take $A_{t} = e_{t-1} \cup \{w_1,w_2,w_3 \}$ for some distinct $w_1,w_2,w_3 \notin e_{t-1}$, and then set $e_{t} = \{w_1,w_2 \}$. Now we choose the $w_i$, one at a time, so that, for $i = 1,2,3$, the vertex $w_i$ is not contained in
\[
 N^2(u) \cup N^2(v) \cup N^2(w_1) \cup \ldots \cup N^2(w_{i-1}),
\]
where in defining $N^2(\cdot)$
we include the set $A_t=\{u,v,w_1,\ldots,w_{i-1}\}$.
 By the condition on the degrees of all vertices,
\begin{equation}
\label{eq:2ndNeighbourhoodsSmall}
 |N^2(u)| + |N^2(v)| + |N^2(w_1)| + \cdots + |N^2(w_{i-1})| < 100 (\tfrac{1}{20}\sqrt{n}+1)^2 < n/2.
\end{equation}
Choosing $w_i$ that does not belong to any of the above second neighbourhoods ensures that there are no external triangles formed by the addition of $A_t$ and that $|A_t \cap A_j| \leq 1 $ for all $j \leq t-2$.

By \eqref{eq:2ndNeighbourhoodsSmall}, if the maximum degree of a vertex is at most $\frac{1}{20}\sqrt{n}$, then there are at least $\frac{1}{2}n$ possible choices for each of $w_1,w_2,w_3$. Now, for each $i = 1,2,3$, choose $w_i$ to be the vertex with the smallest degree among these choices. If this choice makes the degree of $w_i$ larger than $\frac{1}{20}\sqrt{n}$, then all of these $\frac{1}{2}n$ vertices have degree at least $\frac{1}{20}\sqrt{n}$. Hence there are at least $T = \frac{1}{200}n^{3/2}$ sets in $\AA$. This completes the argument.
\end{proof}

Let us show how to turn the set system $\AA$ into a graph that has a long running time in the $K_5$-bootstrap process. For $T = \frac{1}{200}n^{3/2}$, given $\AA$ and $e_1,\ldots ,e_t$ as defined above, we define $G \subseteq E(K_n)$ to be
\begin{equation}
\label{eq:deterministicGraph}
 G = \left( \bigcup_{i = 1}^T A_{i}^{(2)} \right) \setminus \{ e_1, \ldots , e_T \}.
\end{equation}

Let us run the $K_5$-bootstrap process starting from $G$. We claim that it takes $T$ steps before this process stabilizes. Let $\AA$ be a set system on $[n]$ and let $G \subseteq E(K_n)$. We say that $G$ is \emph{covered} by $\AA$ if every edge $e \in G$ is contained in some set $A_i \in \AA$. We say that $G$ is \emph{simply covered} by $\AA$ if there exists some $A_i \in \AA$ such that $ e \subseteq A$ for every $e \in G$. The following observation follows immediately from the third condition in Lemma~\ref{lem:deterministicSystem}.

\begin{observation}
\label{obs:simpleCovers}
For any distinct $u,v,w \in [n]$, if $\AA$ covers $\{u,v,w\}^{(2)}$ then it simply covers it.
\end{observation}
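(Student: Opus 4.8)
The plan is a short case analysis based on how the three edges of the triangle $\{u,v,w\}^{(2)}$ are distributed among the members of $\AA$. Assume $\AA$ covers $\{u,v,w\}^{(2)}$, so each of $uv$, $uw$, $vw$ lies in some set of $\AA$; fix, for each of these three edges, one such set, giving (not necessarily distinct) indices $i,j,k$ with $\{u,v\}\subseteq A_i$, $\{u,w\}\subseteq A_j$ and $\{v,w\}\subseteq A_k$.

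First I would dispose of the case in which two of the indices coincide. Any two edges of a triangle meet in exactly one vertex and together span all three vertices, so if, say, $i=j$, then $\{u,v\}\cup\{u,w\}=\{u,v,w\}\subseteq A_i$, and $A_i$ simply covers $\{u,v,w\}^{(2)}$; the remaining sub-cases are symmetric. Thus it suffices to rule out the possibility that $i,j,k$ are pairwise distinct.

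In that case I would relabel so that the three indices are listed in increasing order as $i'<j'<k'$, with corresponding edges $f_{i'}\subseteq A_{i'}$, $f_{j'}\subseteq A_{j'}$, $f_{k'}\subseteq A_{k'}$, where $\{f_{i'},f_{j'},f_{k'}\}=\{uv,uw,vw\}$. Since $f_{i'}$ and $f_{j'}$ are distinct edges of the triangle, they share exactly one vertex $x$; writing $f_{i'}=\{x,y\}$ and $f_{j'}=\{x,z\}$ with $\{x,y,z\}=\{u,v,w\}$, the remaining edge is $f_{k'}=\{y,z\}$. Then $\{x,y\}\subset A_{i'}$, $\{x,z\}\subset A_{j'}$, $\{y,z\}\subset A_{k'}$ with $i'<j'<k'$ and $x,y,z$ distinct, which is precisely an external triangle and contradicts condition~(3) of Lemma~\ref{lem:deterministicSystem}. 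Hence this case does not occur, and the observation follows.

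The only point requiring a little care — the closest thing to an obstacle here — is verifying that \emph{every} assignment of the three triangle edges to the three sorted indices matches the specific pattern $\{x,y\}\subset A_{i'}$, $\{x,z\}\subset A_{j'}$, $\{y,z\}\subset A_{k'}$ in condition~(3). This is immediate once one notes that the pattern is symmetric under permuting $\{x,y,z\}$ (each vertex lies in exactly two of the three edges), so it places no restriction beyond the indices being distinct; hence the contradiction is always available.
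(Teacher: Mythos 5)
Your proof is correct and is the argument the paper has in mind: the paper declares the observation ``immediate'' from condition~(3) without spelling out the case split, and you supply exactly those details. The pigeonhole step (two coinciding indices force one $A_i$ to contain all three vertices) and the relabelling to match the $\{u,v\}\subset A_i$, $\{u,w\}\subset A_j$, $\{v,w\}\subset A_k$ pattern are both handled properly; in particular you correctly note that the pattern in condition~(3), with $u,v,w$ universally quantified, imposes no constraint beyond the three indices being distinct, so any assignment of the triangle's edges to three distinct sets yields a forbidden external triangle.
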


\begin{lemma}
\label{lem:longRunning}
Let $G_0 = G$ be defined as in \eqref{eq:deterministicGraph}. Then for all $1 \leq t \leq T$ we have $G_{t} \setminus G_{t-1} = \{e_{t}\}$.
\end{lemma}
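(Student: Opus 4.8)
The plan is to prove Lemma~\ref{lem:longRunning} by induction on $t$, showing simultaneously that $G_t = G \cup \{e_1,\ldots,e_t\}$ and that the only edge added at step $t$ is $e_t$. The key structural fact to exploit is that, by construction, every edge present in $G_t$ lies in some $A_i^{(2)}$ with $i \le T$ (this holds for $G_0$ by \eqref{eq:deterministicGraph}, and is maintained since each newly added $e_s$ lies in both $A_s$ and $A_{s+1}$); so throughout the first $T$ steps, $G_t$ is a subgraph of $\bigcup_{i=1}^T A_i^{(2)}$, and $\AA$ covers $G_t$. The reason this matters is that a new infected edge $e$ is created at step $t$ exactly when $G_{t-1}+e$ contains a copy of $K_5$ not present before; such a copy is a set $B$ of five vertices with $B^{(2)} \setminus \{e\} \subseteq G_{t-1}$, and I want to argue that the only possibility is $B = A_t$ and $e = e_t$.

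First I would record the covering picture: assuming inductively $G_{t-1} = G \cup \{e_1,\ldots,e_{t-1}\}$, I note that $G_{t-1} \subseteq \bigcup_{i=1}^T A_i^{(2)}$, and within each $A_i$ the edges missing from $G_{t-1}$ are precisely $e_i \cup e_{i-1}$ among the indices $\le t-1$ — more precisely, by conditions (1) and (2) of Lemma~\ref{lem:deterministicSystem} together with the distinctness of the $e_i$, each $e_i$ lies in exactly the two sets $A_i$ and $A_{i+1}$, so the edges of $A_i^{(2)}$ absent from $G$ are exactly $e_{i-1}$ and $e_i$, and $G_{t-1}$ contains $e_k$ iff $k \le t-1$. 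Next, suppose $e = xy$ is added at step $t$, witnessed by a $5$-set $B \ni x,y$ with all ten edges of $B$ except possibly $xy$ already in $G_{t-1}$ — in fact $B^{(2)} \setminus \{xy\} \subseteq G_{t-1}$ and $xy \notin G_{t-1}$ (otherwise no new copy is created). Since $\AA$ covers $G_{t-1}$, every edge of $B$ other than $xy$ is covered by $\AA$; I then want to use Observation~\ref{obs:simpleCovers} repeatedly on triangles inside $B$ to conclude that all of $B^{(2)} \setminus \{xy\}$ — and hence all of $B$, since five vertices with nine covered edges still contains covered triangles spanning everything — is \emph{simply} covered, i.e. $B \setminus \{$something$\} \subseteq A_j$ for a single $j$; and since $|A_j| = 5$ this forces $B = A_j$. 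Then $xy$ must be one of the two non-$G$ edges of $A_j^{(2)}$, namely $e_{j-1}$ or $e_j$, and since $xy \notin G_{t-1}$ we need the relevant index to be $\ge t$; combined with $j \le T$ and the chain structure this pins down $j = t$ and $xy = e_t$. Conversely, at time $t$ the set $A_t$ has $A_t^{(2)} \setminus \{e_t\} = A_t^{(2)} \setminus \{e_t\} \subseteq G_{t-1}$ (its only other missing edge $e_{t-1}$ was added at step $t-1$), so $A_t$ genuinely completes a new $K_5$ and $e_t$ is indeed added; this gives $G_t \setminus G_{t-1} = \{e_t\}$ and closes the induction.

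The main obstacle, and the step deserving the most care, is the argument that a $5$-set $B$ that is ``$9/10$ covered'' by $\AA$ must in fact be one of the $A_j$. Observation~\ref{obs:simpleCovers} only directly handles triangles; to lift it to a $5$-set I need to verify that among the nine covered edges of $B$ there are enough triangles, sharing vertices in the right way, that iterating ``covered triangle $\Rightarrow$ simply covered triangle'' propagates a \emph{single} common set $A_j$ across all five vertices — equivalently, that the ``simple-cover hypergraph'' on $B$ is connected in the appropriate sense. This is where conditions (2) and (3) of Lemma~\ref{lem:deterministicSystem} do the real work: condition (3) (no external triangles) is exactly Observation~\ref{obs:simpleCovers}, while condition (2) ($|A_i \cap A_j| \le 1$ for $|i-j|\ge 2$) prevents two different $A$'s from each covering a different triangle of $B$ and meeting in two vertices, which is what would otherwise let $B$ be patched together from several sets. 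I would handle this by a short case analysis on how the missing edge $xy$ sits inside $B$ (whether $x$ and $y$ are in a common covering triangle of $B$ or not), reducing in each case to finding a triangle through each vertex of $B$ that is simply covered by the same $A_j$. Everything else — the bookkeeping about which $e_k$ lie in which $A_i$, and the converse direction — is routine given conditions (1) and (2) and the distinctness of the $e_i$.
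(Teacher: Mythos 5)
Your proposal matches the paper's proof in all essentials: both reduce to showing that any $5$-set witnessing a new $K_5^-$ must coincide with some $A_j$ (via Observation~\ref{obs:simpleCovers} on triangles inside the $5$-set together with the intersection conditions), from which the new edge is forced to be $e_t$. The paper phrases this as a minimal-counterexample argument rather than a forward induction, and closes the endgame by noting that $e=e_k$ with $k>t$ would make both $A_k$ and $A_{k+1}$ miss two edges, but these are cosmetic differences, and the crucial ``covered $K_5^-$ implies simply covered'' step is treated at a comparable level of detail in both.
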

\begin{proof}
Suppose that the lemma is false and let $t' \geq 1$ be the first time when $G_{t'} \setminus G_{t'-1} \neq \{e_{t'}\}$. Hence, for all $1 \leq t < t'$ we have $G_{t} \setminus G_{t-1} = \{e_{t}\}$. Clearly $A_{t'}$ induces a $K_5^-$ in $G_{t'-1}$ because $e_{t'-1} \in G_{t'-1}$ and, by the definition of $\AA$, all edges $\{e_1, \ldots , e_T\}$ are distinct which implies that $e_{t'} \notin G_{t'-1}$. Hence we have $e_{t'} \in G_{t'} \setminus G_{t'-1}$ and, by the definition of $t'$, we must have some $e \neq e_{t'}$ such that $e \in G_{t'} \setminus G_{t'-1}$.

If $e$ is included at time $t'$ then it appears by virtue of some $K_5^-$ that is contained in $G_{t'-1}$. If there was no $A_i$ that induced all the edges of this $K_5^-$ in $G_{t'-1}$ then, since $G_{t'-1}$ is a subset of $\bigcup_{i = 1}^T A_{i}^{(2)}$, the set system $\AA$ would would cover a triangle that it would not simply cover. That would contradict Observation~\ref{obs:simpleCovers}.

Hence $e = e_k$ for some $k > t'$. However, only two sets in $\AA$ contain $e_k$ namely, $A_k$ and $A_{k+1}$. By the definition of $t'$ we have $e_{k-1}$, $e_k$, $e_{k+1} \notin G_{t'-1}$ therefore both $A_k$ and $A_{k+1}$ induce $\binom{5}{2}-2$ edges in $G_{t'-1}$. Hence it is impossible that $e_k \in G_{t'} \setminus G_{t'-1}$.
\end{proof}

Lemma~\ref{lem:longRunning} immediately implies that the $K_5$-bootstrap process starting from $G$ stabilizes after at least $\frac{1}{200}n^{3/2}$ time steps.

\section{Open problems}
\label{sec:problems}

In this paper we considered the problem of finding the maximum running time of the
$K_r$-bootstrap process. For $r \geq 5$ the exact answer remains to be found and it is the
obvious open problem in this direction.
In the proof of Theorem~\ref{thm:maxTime5} we were a little careless with the $o(1)$ term in the exponent;
a more careful version of the argument will give a slightly stronger bound (presumably $n^{2-\alpha_r}$
divided by some power of $\log n$), but still $o(n^{2-\alpha_r})$.
It seems unlikely that any random construction of this type can go (significantly, if at all)
beyond $n^{2-\alpha_r}$, since
at this point, in a random graph with the relevant number of edges, many missing edges $e$ have
the property that adding $e$ would create a $K_r^-$. It is tempting to think that therefore
$M_r(n)=n^{2-\alpha_r+o(1)}$ for $r\ge 5$, but we have no real reason to believe this.
Instead, since proving any non-trivial upper bound on $M_r(n)$ is open for $r\ge 5$,
we make the following much weaker conjecture.

\begin{conjecture}
 \label{conj:below_n^2}
 For all $r \geq 5$ we have $M_r(n) = o(n^2)$.
\end{conjecture}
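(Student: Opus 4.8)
Since Conjecture~\ref{conj:below_n^2} is still open, I can only propose the line of attack that seems most promising to me. The starting point is that the process is \emph{greedy}: at each time $t$, the edges added at step $t+1$ are \emph{exactly} the missing edges $e$ of $G_t$ for which $K_B-e\subseteq G_t$ for some $r$-set $B$ containing $e$, since adding any such $e$ creates a fresh copy of $K_r$. Writing $m_{t+1}$ for the number of edges added at step $t+1$, this gives $\sum_{t<T} m_{t+1}=e(G_T)-e(G_0)\le\binom{n}{2}$, and hence the trivial bound $M_r(n)\le\binom{n}{2}$. If $T\ge\eps n^2$ then the average of $m_{t+1}$ over the $T$ steps is at most a constant $K=K(\eps)$, so at least $T/2=\Omega(n^2)$ of the steps add at most $2K$ edges. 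The task therefore reduces to ruling out $\Omega(n^2)$ steps of \emph{bounded width}, i.e., steps at which only $O(1)$ edges are added.

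The structural fact I would try to exploit is the following. Call an $r$-set $B$ \emph{tight at time $t$} if exactly one edge of $K_B$ is missing from $G_t$. If at most $w$ edges are added at step $t+1$, then the tight $r$-sets at time $t$ have at most $w$ distinct missing edges between them; and each $r$-set $B$ is tight on precisely the interval of times lying between the addition of the penultimate and of the last edge of $K_B$. Thus a long phase in which only $O(1)$ edges are added per step is a long phase during which the (possibly huge) family of tight $r$-sets is, at every instant, concentrated on a bounded set of ``active'' missing edges --- a strong rigidity condition reminiscent of the ``no external $K_r^-$'' condition that bounds the length of a good $K_r$-chain. The plan is to convert this into an upper bound $o(n^2)$ on the length of such a phase by a cost--benefit count in the spirit of Lemma~\ref{bc1} and Corollary~\ref{bcl}: fix a bounded vertex set carrying the relevant witness cliques, classify the possible patterns in which the sequence of witnesses can meet it, and argue that the benefit (the number of distinct added edges) cannot be $\Omega(n^2)$ because the cost (a suitably weighted count of the vertices involved, which is at most $n$) would then be too large.

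The reason the conjecture is hard --- and the step I expect to be the genuine obstacle --- is that this last argument is far more delicate in the adversarial setting than in the random construction of Theorem~\ref{th2}. In a good $K_r$-chain the graph $G_0$ contains \emph{no} external $K_r^-$, and it is precisely this global rigidity that forces a favourable cost--benefit ratio; during the bootstrap process, by contrast, the intermediate graphs $G_t$ are saturated with copies of $K_r^-$, so the only rigidity available is ``temporal'' (it holds at the instants when few edges are added) and does not obviously constrain the underlying graph. Showing that an adversary cannot keep re-using the same dense substructure to feed a long bounded-width phase seems to require genuinely new input: even the much weaker assertion $M_r(n)\le(1-\delta)\binom{n}{2}$ is open, and the basic bottleneck is that we have essentially no handle on which edge sets can arise as $G_t$. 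One might instead hope to adapt the exterior-algebra and shifting arguments of Alon, Frankl and Kalai that settled the extremal (smallest percolating set) problem, but it is unclear how to make a linear-algebraic method detect the \emph{number} of time steps, so I would pursue the structural/counting route above first.
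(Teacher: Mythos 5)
This is a conjecture, not a theorem: the paper offers no proof, only heuristic evidence, and explicitly says that ``proving any non-trivial upper bound on $M_r(n)$ is open for $r\ge 5$.'' You correctly recognize this and propose a line of attack rather than a proof, which is the honest and appropriate response. Your opening averaging observation is sound: since each step adds at least one edge, $T\le\binom{n}{2}$, and if $T\ge\eps n^2$ then Markov's inequality forces $\Omega(n^2)$ steps to add $O_\eps(1)$ edges. The reduction to ruling out long ``bounded-width'' phases is a reasonable and arguably the natural first move.

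One small slip in the proposed framework: an $r$-set $B$ that is \emph{tight at time $t$} (exactly one edge of $K_B$ missing from $G_t$) has that missing edge added at step $t+1$, by the very definition of the process. So $B$ is tight for \emph{exactly one} step, not for an interval, and the phrase ``the interval of times lying between the addition of the penultimate and of the last edge of $K_B$'' always denotes a single time step. Your structural point survives --- at a low-width step, the tight $r$-sets at time $t$ have few distinct missing edges --- but the picture of a slowly-drifting family of tight sets should be replaced by a picture of a rapidly changing family whose missing edges stay concentrated, which is a rather different (and harder) kind of rigidity to exploit. The genuine obstacle you identify --- that the ``no external $K_r^-$'' rigidity of the random construction is absent once $G_t$ is saturated with copies of $K_r^-$, leaving only temporal rigidity --- is, in my view, exactly right and matches the paper's implicit assessment.

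Your route also differs in emphasis from the paper's own heuristic evidence for $r=5$. The paper observes that the ``no external triangles'' condition of Lemma~\ref{lem:deterministicSystem} already forces $T=o(n^2)$ by Theorem~1.7 of Erd\H{o}s, Frankl and R\"odl (a Brown--Erd\H{o}s--S\'os / $(6,3)$-type extremal bound on linear-triple systems), and then argues that a slightly denser configuration --- two $K_5$'s sharing an edge $\{u,v\}$ plus one cross-edge $\{w,z\}$ --- forces a $K_8$ within two steps, suggesting that any $\Omega(n^2)$-step process must avoid such configurations and is therefore constrained as if by the extremal hypergraph bound. Your cost--benefit counting in the spirit of Lemma~\ref{bc1} and Corollary~\ref{bcl} is a plausible alternative route, and your averaging step is a clean reduction not spelled out in the paper; but neither your sketch nor the paper's heuristic comes close to a proof, and both leave open precisely the step you flag: showing that an adversary cannot recycle dense substructure to sustain a long low-width phase. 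It would strengthen your write-up to connect your ``tight-set'' rigidity to the removal-lemma/$(6,3)$-theorem circle of ideas that the paper gestures at, since that is where the known $o(n^2)$ bounds of this flavour actually come from.
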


The following evidence supports Conjecture~\ref{conj:below_n^2} for $r=5$. We should expect that if the process runs for a long time then the infection spreads in a ``controlled'' way, i.e., most of the time the infection of one edge completes just one infected copy of $K_5^-$, which on the next step again infects only one new edge. For this to happen, the resulting copies of $K_5$ in the infected graph should not be packed ``too tightly'', to prevent the infection spreading out of control. The third condition in Lemma~\ref{lem:deterministicSystem}, about the lack of external triangles, itself implies that we must have $T = o(n^2)$ (see Theorem 1.7 in Erd\H{o}s, Frankl, R\H{o}dl~\cite{erdosFranklRodl}). This condition is more strict than that which a graph on which the infection spreads in a controlled way needs to satisfy. However, if $G_t$ contains two infected copies $A_1, A_2$ of $K_5$ that share an edge $\{u,v\}$, and an additional infected edge $\{w,z\}$ for some $w \in A_1, z \in A_2$ (hence both $\{u,v,w\}$ and $\{u,v,z\}$ induce external triangles), then it is easy to check that the $8$ vertices containing $A_1 \cup A_2$ induce a $K_{8}$ in $G_{t+2}$. This situation seems difficult to avoid if the process is meant to run for at least $\alpha n^2$ steps for some $\alpha > 0$.

\bibliography{mylargebib}

\end{document}